\newtheorem{Thm}{Theorem}[section]
\newtheorem{Cor}[Thm]{Corollary}
\newtheorem{Lemma}[Thm]{Lemma}
\newtheorem{Prop}[Thm]{Proposition}
\theoremstyle{definition}
\newtheorem{Def}[Thm]{Definition}
\theoremstyle{remark}
\newtheorem{Ex}[Thm]{Example}
\newtheorem{Rmk}[Thm]{Remark}
\numberwithin{equation}{section}
\begin{document}
	
	\title{Representation of solutions of the one-dimensional Dirac equation in terms of Neumann series of Bessel functions}
	\author{E. Roque, S.M. Torba \\{\small Departamento de Matem\'{a}ticas, Cinvestav, Unidad Quer\'{e}taro, }\\{\small Libramiento Norponiente \#2000, Fracc. Real de Juriquilla,
			Quer\'{e}taro, Qro., 76230 MEXICO.}\\{\small e-mail: earoque@math.cinvestav.edu.mx, storba@math.cinvestav.edu.mx}}
	
	\maketitle
	
	\begin{abstract}
		A representation of solutions of the one-dimensional Dirac equation is obtained. The solutions are represented as Neumann series of Bessel functions. The representations are shown to be uniformly convergent with respect to the spectral parameter. Explicit formulas for the coefficients are obtained via a system of recursive integrals. The result is based on the Fourier-Legendre series expansion of the transmutation kernel. An efficient numerical method for solving initial-value and spectral problems based on this approach is presented with a numerical example. The method can compute large sets of eigendata with non-deteriorating accuracy.
	\end{abstract}
	
	\section{Introduction}
	
	In this work, we consider the one-dimensional Dirac equation in canonical form
	\begin{equation}\label{eqn:dirac}
		B\frac{dY}{dx}+Q(x)Y=\lambda Y, \quad Y(x)=\begin{pmatrix}
			y_1(x)\\
			y_2(x)
		\end{pmatrix},
	\end{equation}
	on a finite interval \( x \in (0,b) \), where
	\begin{equation}
		B=\begin{pmatrix}
			0 & 1 \\ -1 & 0
		\end{pmatrix}, \quad
		Q(x)=\begin{pmatrix}
			p(x) & q(x) \\
			q(x) & -p(x)
		\end{pmatrix}, \quad p,q\in L^2([0,b],\mathbb{C}),
	\end{equation}
	and \(\lambda \) is an arbitrary complex constant known as the spectral parameter and \(Q\) is the matrix potential.\\

	The aim of this work is to obtain a representation of the matrix solution of the one-dimensional Dirac equation \eqref{eqn:dirac} in terms of Neumann series of Bessel functions (NSBF for short). Expansion of solutions of certain differential equations in terms of Bessel functions is not new. In \cite{Gersten}, Gersten expanded solutions for the radial Schrödinger equation in terms of spherical Bessel functions,  although the coefficients of the expansion depend solely on the spectral parameter, which differs from our approach. Later, in \cite{Chebli} the authors found  a representation of solutions of Sturm-Liouville equations in terms of Neumann series of Bessel functions. However, such representation does not possess some advantageous properties, such as uniformity with respect to the spectral parameter. Recently, Kravchenko, Navarro, and Torba found a new NSBF representation of solutions to the Schrödinger equation \cite{nsbf} that does possess uniformity properties. Since then, several works have been published that make use of this representation to solve a variety of direct and inverse spectral problems (see the book \cite{kravchenko-gb} and the references therein).  The present work extends the method to the system \eqref{eqn:dirac}.\\
	
	The one-dimensional Dirac equation has been an active research topic due to its importance both in mathematics and problems arising in mathematical physics, from the early works of Gasymov and Levitan \cite{Gasymov1,Gasymov2,Gasymov3} up to more recent works \cite{Hryniv, Shkalikov, HrynivManko} and numerous other publications. The one-dimensional Dirac equation appears in the inverse scattering method for the modified Korteweg-de Vries equation \cite{AKNS, AS}. It is also directly related to the Zakharov-Shabat system, which in turn appears in the inverse scattering method for non-linear Schrödinger equations, see \cite{HrynivManko} and the references therein. Furthermore, the one-dimensional Dirac equation can be related to the one-dimensional Schrödinger equation \cite[Section 8]{Nelson-analytic}, which is another equation of major importance. \\
	
	Despite the aforementioned importance of the Dirac system, there is still a need for fast, accurate, and efficient numerical methods to solve both initial and boundary value problems that pave the way towards solving other types of spectral problems numerically.\\
	
	Previous numerical methods, published in \cite{AnnabyTharwat,AnnabyTharwat2,AnnabyTharwat3,Tharwat}, share a common disadvantage due to the necessity to solve particular initial value problems for all sampling points, which results in high computational time. Later, in \cite{Nelson-spps} a spectral parameter power series representation for a pair of fundamental solutions of \eqref{eqn:dirac} was obtained by Gutiérrez-Jiménez and Torba, showing the possibility to compute dozens of eigenvalues with good accuracy and a relatively low computational cost. The same authors then found a better numerical method via an analytic approximation of the transmutation operator; see \cite{Nelson-analytic}. This work was certainly a big step in the right direction, as the authors showed that such a method can compute hundreds of eigenvalues with no-deteriorating accuracy and also possesses sought-for properties: efficient evaluation with respect to the spectral parameter, uniform error bound, and exponentially fast convergence. \\
	
	One might wonder why then do we need another numerical method to solve spectral problems for the Dirac equation. There are a few reasons for this. First, we obtain an exact representation for the solution and not an approximation. Moreover, the procedure found in the present work is considerably simpler to implement and also possesses the previously mentioned properties. Additionally, the NSBF representation for solutions of the Schrödinger equation found in \cite{nsbf} has proven to be a remarkable tool to solve not only direct spectral problems but also a wide class of inverse problems (see \cite{krav-sc, krav-icp,krav-qt, krav-cp}). Hence, the NSBF representation of the matrix solution of the Dirac system \eqref{eqn:Unsbf} brings up the possibility to tackle several spectral problems that, to the best of our knowledge, have not been solved numerically. \\
	
	The paper is organized as follows. In section \ref{sec:trans} we introduce the concept of a transmutation operator. A transmutation operator for the Dirac equation can be realized in the form of a Volterra integral operator \cite{levitan, Nelson-analytic}. In section \ref{sec:repr} we expand the kernel \(K(x,t)\) of the integral operator in terms of Legendre polynomials. The Fourier-Legendre expansion of the kernel is used to obtain the main result of this work, the representation \eqref{eqn:Unsbf}.  Then, we use \eqref{eqn:Unsbf} to obtain the NSBF representation of the matrix solution of the Zakharov-Shabat system \eqref{eqn:Znsbf}. We establish some results about the rate of convergence of the NSBF series in section \ref{sec:approx} in dependence of the smoothness of \(Q\). Finally, a numerical example is developed in section \ref{sec:numerical}. Appendix \ref{appendix:mapping} shows a connection between the coefficients of the NSBF representation \eqref{eqn:Unsbf} and the formal powers obtained in \cite{Nelson-spps}.
	
	\subsection*{Notations}
	Throughout this work we denote by \( \mathcal{M}_2 \) the algebra of \( 2\times 2 \) matrices with complex valued entries endowed with the operator norm induced by the Euclidean norm of the space \( \mathbb{C}^2 \). Moreover, the space \( L^2([a,b],\mathcal{M}_2) \) denotes the space of \( \mathcal{M}_2 \)-valued functions with the norm
	\begin{equation}
		\Vert A \Vert_2:= \left( \int_{a}^{b} \left\vert A(t) \right\vert^2 dt \right)^{1/2}.
	\end{equation}
	Additionally, \( W^{p,2}([a,b],\mathcal{M}_2)\) denotes the space of all matrix-valued functions \(A\) whose derivatives of order at most \(p-1\) are absolutely continuous functions and \(A^{(p)}\in L^2([a,b],\mathcal{M}_2) \). The space \( C^{p}([a,b],\mathcal{M}_2)\) is the space of \(p\)-times continuously differentiable matrix-valued functions and \( \Vert A \Vert_{C([a,b],\mathcal{M}_2)}=\max_{[a,b]}\vert A \vert.\)
	
\section{Transmutation operators}\label{sec:trans}
We introduce the definition of a tansmutation operator.
\begin{Def}[\cite{levitan}]\label{DefTransOpe} Let $E$ be a linear topological space and $E_{1}$ its linear subspace (not necessarily closed). Let $\mathcal A_{1},\mathcal A_{2}:E_{1}\rightarrow E$ be linear operators. A linear invertible operator $T$ defined on the whole $E$ such that $E_{1}$ is invariant under the action of $T$ is called a transmutation operator for the pair of operators $\mathcal A_{1}$ and $\mathcal A_{2}$ if it fulfills the following two conditions.
	\begin{enumerate}
		\item Both the operator $T$ and its inverse $T^{-1}$ are continuous in $E$;
		\item The following operator equality is valid  $$\mathcal A_{1}T=T\mathcal A_{2}$$ or which is the same $$\mathcal A_{1}=T\mathcal A_{2}T^{-1}.$$
	\end{enumerate}
\end{Def}

Let us denote by
\begin{equation}\label{eqn:diracop}
	\mathcal A_{Q}:=B\frac{d}{dx}+Q(x),
\end{equation}
a differential operator related to the system \eqref{eqn:dirac},
and by $\mathcal A_{0}$ the differential operator \eqref{eqn:diracop} having $Q$ as the null matrix-valued function. In order to introduce the transmutation operator we require an extension of \(Q \) to the symmetric interval \( [-b,b] \). However, none of our results depend on the values of \( Q \) outside of the interval \( [0,b]  \). Thus, we consider such extension when necessary.
\begin{Thm}\cite[Thm 2.2]{Nelson-analytic}\label{thm:transmutation}
	Suppose that \(Q \) is a continuously differentiable matrix-valued function on \( [-b,b] \). Then a transmutation operator \(T: E \to E, \, E:=C([-b,b], \mathbb{C}^2) \), relating the operators \( A_0 \) and \( A_Q \) for all \( Y \in E_1:=C^1([-b,b],\mathbb{C}^2 ) \), can be realized in the form of a Volterra integral operator
	\begin{equation}\label{eqn:top}
		TY(x) = Y(x) + \int_{-x}^{x} K(x,t)Y(t)dt,
	\end{equation}
	where \( K(x,t) \) is a \( 2\times 2 \) matrix-valued function satisfying the partial differential equation
	\begin{equation}\label{eqn:pdek}
		BK_x(x,t)+K_t(x,t)B=-Q(x)K(x,t)
	\end{equation}
	on the domain \( \Omega:=\{ (x,t): 0\leq \left\vert x \right\vert \leq b, \left\vert t \right\vert \leq x \}  \) with the Goursat conditions
	\begin{align}
		BK(x,x)-K(x,x)B=-Q(x), \label{eqn:goursatx}\\
		BK(x,-x)+K(x,-x)B=0. \label{eqn:goursatmx}
	\end{align}
	Conversely, if \( K(x,t) \) is a solution of the Goursat problem \eqref{eqn:pdek}, \eqref{eqn:goursatx},\eqref{eqn:goursatmx}, then the operator \(T \) determined by formula \eqref{eqn:top} is a transmutation operator for the pair of operators \( A_Q \) and \( A_0\).
\end{Thm}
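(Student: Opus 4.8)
The plan is to treat the two assertions of the theorem separately: I would establish the converse (sufficiency) statement by a direct verification, since it pins down precisely how each hypothesis is used, and then handle the existence statement, which is the substantive part, by reducing the Goursat problem to a system of Volterra integral equations solved by successive approximations.

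For the converse I would fix \(Y\in E_1=C^1([-b,b],\mathbb{C}^2)\) and compute \(\mathcal{A}_Q(TY)\) and \(T(\mathcal{A}_0 Y)\) independently from \eqref{eqn:top}. Differentiating the integral with the variable limits \(\pm x\) by the Leibniz rule produces, besides the interior term \(\int_{-x}^x K_x(x,t)Y(t)\,dt\), two boundary contributions \(K(x,x)Y(x)\) and \(K(x,-x)Y(-x)\); multiplying by \(B\) and adding \(Q(x)(TY)(x)\) yields \(\mathcal{A}_Q(TY)\). On the other side, \(T(\mathcal{A}_0 Y)(x)=BY'(x)+\int_{-x}^x K(x,t)BY'(t)\,dt\), and an integration by parts moves the derivative off \(Y\), generating the boundary terms \(K(x,x)BY(x)-K(x,-x)BY(-x)\) together with \(-\int_{-x}^x K_t(x,t)B\,Y(t)\,dt\). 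Subtracting the two expressions, the \(Y'(x)\) terms cancel, the coefficient of \(Y(x)\) is \(BK(x,x)-K(x,x)B+Q(x)\), the coefficient of \(Y(-x)\) is \(BK(x,-x)+K(x,-x)B\), and the integrand is \(BK_x+K_tB+Q K\). These three quantities vanish by \eqref{eqn:goursatx}, \eqref{eqn:goursatmx}, and \eqref{eqn:pdek} respectively, so \(\mathcal{A}_Q T=T\mathcal{A}_0\) on \(E_1\).

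The continuity and invertibility demanded by Definition \ref{DefTransOpe} then follow from the Volterra structure of \(T\). Writing \(T=I+\mathcal{K}\), where \(\mathcal{K}\) is the integral operator with the continuous kernel \(K\) on the triangular domain \(\Omega\), \(\mathcal{K}\) is bounded on \(E=C([-b,b],\mathbb{C}^2)\), and being a Volterra operator it is quasi-nilpotent (zero spectral radius); hence \(T^{-1}=I+\mathcal{L}\) with \(\mathcal{L}\) again a Volterra integral operator, both \(T\) and \(T^{-1}\) are continuous, and \(E_1\) is invariant under \(T\) because \(K\in C^1\). This completes the converse.

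The main effort is the existence direction, namely producing a kernel \(K\) that solves \eqref{eqn:pdek} with the Goursat data \eqref{eqn:goursatx}, \eqref{eqn:goursatmx}. I would pass to characteristic coordinates \(\xi=x+t\), \(\eta=x-t\), in which \(\partial_x=\partial_\xi+\partial_\eta\) and \(\partial_t=\partial_\xi-\partial_\eta\), rewrite \eqref{eqn:pdek} entrywise (the factor \(B\) acting on the left and the right couples the entries of \(K\) in a structured way, pairing \((k_{11},k_{22})\) against \((k_{12},k_{21})\)), and integrate along characteristics so that each Goursat condition becomes initial data on a boundary ray of \(\Omega\). This converts the problem into a closed system of Volterra integral equations for the entries of \(K\), which I would solve by the method of successive approximations. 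Here lies the chief obstacle: one must route the integration contours so that the two conditions \eqref{eqn:goursatx} and \eqref{eqn:goursatmx}, which constrain different combinations of the matrix entries, are \emph{simultaneously} encoded as consistent initial data, and one must verify that the resulting iteration produces a Neumann-type series converging in \(C(\Omega)\) with a majorant controlled by \(\|Q\|\). The hypothesis \(Q\in C^1\) is then used to upgrade the fixed point to a genuine \(C^1(\Omega)\) solution, which both justifies the formal differentiations used in the converse and guarantees that the limit kernel indeed satisfies the full Goursat problem, whence the associated operator \(T\) is the sought transmutation.
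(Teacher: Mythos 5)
First, a structural point: the paper does not prove Theorem \ref{thm:transmutation} at all --- it is imported verbatim from \cite[Thm 2.2]{Nelson-analytic}, so your attempt can only be compared with the proof in that reference, fragments of which are visible inside the paper (in the proof of Proposition \ref{prop:uboundK}). Your converse half is correct and complete: the Leibniz differentiation of $\int_{-x}^{x}K(x,t)Y(t)\,dt$ (both boundary terms entering with a plus sign), the integration by parts in $T(\mathcal A_0 Y)$, and the identification of the coefficient of $Y(x)$ with $BK(x,x)-K(x,x)B+Q(x)$, of $Y(-x)$ with $BK(x,-x)+K(x,-x)B$, and of the integrand with $BK_x+K_tB+QK$ is exactly the standard verification, and the quasi-nilpotency argument (the iterates satisfy $\vert \mathcal K^n \vert \le M^n(2b)^n/n!$ on the nested domains $\vert t_n\vert\le\cdots\le\vert t_1\vert\le \vert x\vert$) correctly yields continuity and invertibility of $T$, with $E_1$ invariant since $K\in C^1$.

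The existence half is where the substance of the theorem lies, and there your text is a plan rather than a proof: you explicitly leave unresolved what you call the ``chief obstacle'' (routing the characteristics so that both Goursat conditions are consistently encoded) and the convergence estimate. Two comments. First, the obstacle dissolves on direct computation, so your plan is viable: writing \eqref{eqn:pdek} entrywise gives $(\partial_x-\partial_t)(\kappa_{12}+\kappa_{21})=(QK)_{22}-(QK)_{11}$ and $(\partial_x-\partial_t)(\kappa_{22}-\kappa_{11})=-(QK)_{12}-(QK)_{21}$, together with $(\partial_x+\partial_t)(\kappa_{11}+\kappa_{22})=(QK)_{21}-(QK)_{12}$ and $(\partial_x+\partial_t)(\kappa_{21}-\kappa_{12})=-(QK)_{11}-(QK)_{22}$; condition \eqref{eqn:goursatx} prescribes exactly the first pair on $t=x$ (values $-p$ and $-q$), which is the source line of the characteristics $x+t=\mathrm{const}$, and \eqref{eqn:goursatmx} prescribes exactly the second pair (values $0$) on $t=-x$, the source line of the characteristics $x-t=\mathrm{const}$. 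Hence the four combinations satisfy a closed Volterra system with no consistency issue, and successive approximations converge by the usual $1/n!$ majorant. Second, this route genuinely differs from the cited proof: as quoted in the proof of Proposition \ref{prop:uboundK}, \cite{Nelson-analytic} constructs $H(\xi,\eta)=K(x,t)$ from integral equations whose kernel is $G_Q=BQ'+Q^2$, i.e., via the second-order wave-type reduction $H_{\xi\eta}=G_Q H$ in characteristic variables, and that is precisely where the hypothesis $Q\in C^1$ enters; in your scheme the first-order system needs only continuity of $Q$ for the fixed point, with $C^1$ used solely to upgrade the kernel to $C^1(\Omega^+)$. So your approach, once the decoupling and the convergence estimate above are actually written out, is arguably more elementary; but as submitted, the existence direction remains a genuine gap, since its two key steps are announced as tasks rather than carried out.
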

\begin{Rmk}
	The existence of transmutation operators for the Dirac differential operator \eqref{eqn:diracop} in \( L^p\) spaces has also been establish in \cite{Hryniv}.
\end{Rmk}
The domain \( \Omega \) is the union of the sets
\begin{equation}
	\Omega^+:=\{ (x,t): 0\leq  x \leq b, \left\vert t \right\vert \leq x \}, \quad \Omega^-:=\{ (x,t): -b\leq  x \leq 0, \left\vert t \right\vert  \leq \vert x \vert \},
\end{equation}
and the Goursat problem can be solved independently on each. The solution \(K(x,t)\) on \(\Omega^+\) depends solely on the values of \(Q\) in the interval \([0,b]\). Moreover, for \( 0<x\leq b\), the integral formula \eqref{eqn:top} only requires the knowledge of the integral kernel \( K \) on the set \( \Omega^+ \). Hence, it only depends on the values of \(Q\) in the interval \([0,b]\). \\

Because of the transmutation property, a transmutation operator for the pair \( \mathcal A_0 \) and \( \mathcal A_Q \) sends solutions to the equation \( \mathcal A_0Y = \lambda Y \) to solutions of the equation \( \mathcal A_Q V = \lambda V \). Indeed,
\[
\mathcal{A}_Q(TY)= (\mathcal{A}_QT)Y=(T \mathcal{A}_0)Y=T(\mathcal{A}_0Y)=T (\lambda Y)= \lambda (T Y).
\]
\begin{Rmk}
	Notice that the transmutation operator, given by eq. \eqref{eqn:top}, acts on vector-valued functions \( Y \in C([-b,b],\mathbb{C}^2 ) \). Nevertheless, it is well-defined to evaluate the operator on \(2\times 2 \) matrix-valued functions, as the operator would act column by column in such a case. It is rather common to work with a pair of fundamental solutions to the Dirac equation \eqref{eqn:dirac}, namely, the solutions \( C(\lambda, x) \) and \( S(\lambda, x) \) that satisfy the initial conditions
	\[
	C(\lambda,0)=\begin{pmatrix} 1 \\ 0 \end{pmatrix}, \quad S(\lambda,0)=\begin{pmatrix} 0 \\ 1 \end{pmatrix}.
	\]
	In this article, we work directly with matrix-valued functions and the fundamental matrix solution of the Dirac equation, and the NSBF representation for \( C(\lambda, x) \) and \( S(\lambda, x) \) are given by the first and second column of the matrix-valued NSBF representation.
\end{Rmk}
If \( Q \equiv 0 \), the matrix solution \(U_0(\lambda, x) \) to \eqref{eqn:dirac} that satisfies the initial condition \( U_0(\lambda,0)=I \) is given by
\begin{equation}\label{eqn:u0}
	U_0(\lambda, x)=
	\begin{pmatrix}
		\cos \lambda x & -\sin \lambda x \\
		\sin \lambda x & \cos \lambda x
	\end{pmatrix}.
\end{equation}
Thus, the knowledge of the nucleus \( K  \) in the domain \( \Omega^+ \) is sufficient to find the matrix solution \( U(\lambda,x) \) to the Dirac equation \eqref{eqn:dirac} with initial condition \( U(\lambda,0)=I \) via the relation
\begin{equation}\label{eqn:transp}
	U(\lambda, x)=U_0(\lambda, x)+\int_{-x}^{x} K(x,t) U_0(\lambda, t)dt.
\end{equation}
At first glance, it might appear that the use of transmutation operators leads us to a more difficult task, namely, the solution of a Goursat problem. Solving a Goursat problem with an analytical procedure that is feasible for numerical implementation can be challenging. However, we overcome this issue by using a different approach, which is the Fourier-Legendre series expansion of the integral nucleus of the transmutation operator. We then find a practical method for computing its coefficients.

\section{The NSBF representation}\label{sec:repr}

Throughout this section we  assume that \( Q\in C^1([0,b], \mathcal{M}_2) \). Due to the fact that \(K\in C^1(\Omega^+, \mathcal{M}_2) \) (see proposition \ref{prop:smoothness}) it admits a uniformly convergent Fourier-Legendre series. Thus, the transmutation kernel matrix can be expanded in terms of the Legendre polynomials \(P_n \) as
\begin{equation}\label{eqn:legendre}
	K(x,t)=\sum_{n=0}^{\infty} \frac{1}{x}K_n(x) P_n(t/x)
\end{equation}
where
\begin{equation}\label{eqn:kappan}
	K(x,t)=\begin{pmatrix}
		\kappa_{11}(x,t) & \kappa_{12}(x,t) \\
		\kappa_{21}(x,t) & \kappa_{22}(x,t)
	\end{pmatrix},
	K_n(x)=\begin{pmatrix}
		\kappa^{(n)}_{11}(x) & \kappa^{(n)}_{12}(x) \\
		\kappa^{(n)}_{21}(x) & \kappa^{(n)}_{22}(x)
	\end{pmatrix}.
\end{equation}
\begin{Rmk}
	Due to the orthogonality of Legendre polynomials, multiplying \eqref{eqn:legendre} by \(P_n(\frac{t}{x}) \) and integrating we see that
	\begin{equation}\label{eqn:Kncoeff}
		\int_{-x}^{x}K(x,t)P_n \left( \frac{t}{x} \right)dt=\sum_{j=0}^{\infty}\frac{K_j(x)}{x}\int_{-x}^{x}P_j \left( \frac{t}{x} \right)
		P_n \left( \frac{t}{x} \right) dt = \frac{2}{2n+1}K_n(x).
	\end{equation}
\end{Rmk}
\begin{Rmk}\label{rmk:k0}
	Notice that for \( \lambda =0 \), equation \eqref{eqn:transp} together with \eqref{eqn:Kncoeff} show that
	\begin{equation}
		U(0,x)=I+\int_{-x}^{x}K(x,t) \cdot I dt=I+2K_0(x)
	\end{equation}
	In other words, computing the coefficient \( K_0 \) is equivalent to computing the fundamental solution to \( B\frac{dY}{dx} +Q(x)Y=0 \). This fact will be useful later on.
\end{Rmk}
Due to eq. \eqref{eqn:transp} and the fact that the series \eqref{eqn:legendre} is uniformly convergent, we have that
\begin{align*}
	U(\lambda, x) &= U_0(\lambda, x)+\int_{-x}^{x}K(x,t)U_0(\lambda,t)dt \\
	&= U_0(\lambda, x) + \sum_{n=0}^{\infty} \frac{1}{x} K_n(x) \int_{-x}^{x} U_0(\lambda,t) P_n \left( \frac{t}{x} \right) dt.
\end{align*}
On the other hand,
\begin{equation}
	\frac{1}{x}\int_{-x}^{x} U_0(\lambda, t) P_{2n}\left(\frac{t}{x} \right) dt = \frac{1}{x} \int_{-x}^{x} \begin{pmatrix}
		\cos \lambda x & -\sin \lambda x \\
		\sin \lambda x & \cos \lambda x
	\end{pmatrix} P_{2n}\left(\frac{t}{x} \right) dt = 2(-1)^n j_{2n}(\lambda x) I,
\end{equation}
where \(j_k\) stands for the spherical Bessel function of order \(k\). Similarly,
\begin{equation}
	\frac{1}{x}\int_{-x}^{x} U_0(t) P_{2n+1}\left(\frac{t}{x} \right) dt
	= -2(-1)^n j_{2n+1}(\lambda x) B.
\end{equation}
Therefore,
\begin{equation}\label{eqn:Unsbf}
	\begin{split}
		U(\lambda, x)  &= U_0(\lambda, x) + \sum_{n=0}^{\infty} \frac{1}{x} K_n(x) \int_{-x}^{x} U_0(\lambda, t) P_n \left( \frac{t}{x} \right) dt \\
		& = U_0(\lambda, x) + 2 \sum_{n=0}^{\infty}(-1)^n K_{2n}(x)j_{2n}(\lambda x)-2\sum_{n=0}^{\infty}(-1)^n K_{2n+1}(x)B j_{2n+1}(\lambda x).
	\end{split}
\end{equation}
The series \eqref{eqn:Unsbf} is known as a Neumann series of Bessel functions. In the next subsection we find a system of recursive differential equations for the coefficients \(K_n(x)\).
\subsection{A recursive system of equations}
First, we begin to formally derive our results, and in the next section justify our procedure. Differentiating the series term by term with respect to \(x \) we get
\begin{align*}
	U'(\lambda,x)&=U_0'(\lambda, x) + 2\sum_{n=0}^{\infty} (-1)^n \left[ K_{2n}'(x) j_{2n}(\lambda x) + \lambda K_{2n}(x) j_{2n}'(\lambda x) \right] \\
	& \,-2 \sum_{n=0}^{\infty} (-1)^n \left[ K_{2n+1}'(x)Bj_{2n+1}(\lambda x) + \lambda K_{2n+1}(x) B j_{2n+1}'(\lambda x) \right]
\end{align*}
Using the formulas (see \cite[f. 10.1.21, 10.1.22]{Abra})
\begin{align*}
	j_k'(z)&=-j_{k+1}(z) + \frac{k}{z}j_k(z), \quad k=0,1,\ldots, \\
	j_k'(z)&=j_{k-1}(z)-\frac{k+1}{z} j_k(z), \quad k=1,2,\ldots,
\end{align*}
we obtain that
\begin{align*}
	\lambda j_{2n}'(\lambda x) &= -\lambda j_{2n+1}(\lambda x) + \frac{2n}{x}j_{2n}(\lambda x) \\
	\lambda j_{2n+1}'(\lambda x) &= \lambda j_{2n}(\lambda x) - \frac{2n+2}{x}j_{2n+1}(\lambda x).
\end{align*}
We can simplify the derivative of \(U \) to
\begin{equation}\label{eqn:dUnsbf}
	\begin{split}
		U'(\lambda, x) &=U_0'(\lambda, x) +2 \sum_{n=0}^{\infty}(-1)^n\left[ K_{2n}'(x)+\frac{2n}{x}K_{2n}(x)-\lambda K_{2n+1}(x)B \right]j_{2n}(\lambda x) \\
		& \, -2\sum_{n=0}^{\infty} (-1)^n\left[ K_{2n+1}'(x)B -\frac{2n+2}{x} K_{2n+1}B+\lambda K_{2n}(x)\right]j_{2n+1}(\lambda x).
	\end{split}
\end{equation}
Formally substituting into eq. \eqref{eqn:dirac}, we get that \( BU'(\lambda, x)+Q(x)U(\lambda, x)-\lambda U(\lambda, x)=0 \) if and only if
\begin{equation}\label{eqn:nsbfv1}
	\begin{split}
		0&=Q(x)U_0(\lambda, x)+2(BK_0'(x)+Q(x)K_0(x) )j_0(\lambda x)  \\
		&\,+2\sum_{n=1}^{\infty} (-1)^n \left[BK_{2n}'(x) + \frac{2n}{x}BK_{2n}(x)+Q(x)K_{2n}(x) \right] j_{2n}(\lambda x)\\ 
		&\,-2\lambda\sum_{n=0}^{\infty} (-1)^n \left[BK_{2n+1}(x)B+K_{2n}(x) \right] j_{2n}(\lambda x)  \\
		&\,-2\sum_{n=0}^{\infty} (-1)^n \left[BK_{2n+1}'(x)B-\frac{2n+2}{x}BK_{2n+1}(x)B+Q(x)K_{2n+1}(x)B \right]j_{2n+1}(\lambda x) \\
		&\,-2\lambda \sum_{n=0}^{\infty} (-1)^n \left[BK_{2n}(x)-K_{n+1}(x)B \right] j_{2n+1}(\lambda x).
	\end{split}
\end{equation}
On the other hand,
\begin{equation}
	Q(x)U_0(\lambda, x)=Q(x)(\cos( \lambda x) I-\sin( \lambda x) B ) = \cos (\lambda x) Q(x) - \lambda x j_0(\lambda x) Q(x)B.
\end{equation}
Due to remark \ref{rmk:k0}, we have that
\begin{equation}
	K_0(x)=\frac{1}{2} \left( U(0,x)-I \right).
\end{equation}
Thus,
\begin{equation}
	2\left(BK_0'(x)+Q(x)K_0(x) \right)j_0(\lambda x)=-Q(x)j_0(\lambda x).
\end{equation}
Therefore,
\begin{equation}\label{eqn:qu0j}
	\begin{split}
		Q(x)U_0(\lambda, x)+2\left( BK_0'(x)+Q(x)K_0(x)\right)j_0(\lambda x)&=\left(\cos (\lambda x) - j_0(\lambda x) \right)Q(x)-
		\lambda x j_0(\lambda x)Q(x)B \\
		&=-\lambda x j_1( \lambda x )Q(x)-\lambda x j_0(\lambda x) Q(x)B.
	\end{split}
\end{equation}
Applying the recursive formula for spherical Bessel functions \cite[f. 10.1.19]{Abra}
\begin{equation}
	j_n(\lambda x)=\frac{\lambda x}{2n+1}(j_{n-1}(\lambda x)+j_{n+1}(\lambda x)),
\end{equation}
to the first and third series in eq. \eqref{eqn:nsbfv1}, respectively, in conjuction with \eqref{eqn:qu0j}, after division by \(-2\lambda x \), we can rewrite eq. \eqref{eqn:nsbfv1} as a series of the form
\begin{equation}
	\sum_{n=0}^{\infty}\alpha^{(1)}_n(x) j_{2n}(\lambda x)+\sum_{n=0}^{\infty} \alpha^{(2)}_n(x)j_{2n+1}(\lambda x)=0,
\end{equation}
where
\begin{equation}\label{eqn:alphaev}
	\begin{split}
		\alpha^{(1)}_n(x) = (-1)^n &\left\{  \frac{1}{4n+3} \left[ BK_{2n+1}'(x)-\frac{2n+2}{x}BK_{2n+1}(x)+QK_{2n+1}(x) \right] \right. \\
		&\, -\frac{1}{4n-1} \left[ BK_{2n-1}'(x)+\frac{2n}{x}BK_{2n-1}(x)+Q(x)K_{2n-1}(x) \right] \\
		&\, \left. + \frac{BK_{2n+1}(x)}{x}-\frac{K_{2n}(x)B}{x} \right\}B,
	\end{split}
\end{equation}
and
\begin{equation}\label{eqn:alphaodd}
	\begin{split}
		\alpha^{(2)}_n(x) = (-1)^n &\left\{  \frac{1}{4n+5} \left[BK_{2n+2}'(x)+\frac{2n+2}{x}BK_{2n+2}(x)+Q(x)K_{2n+2}(x) \right]  \right. \\
		&\, \frac{1}{4n+1} \left[BK_{2n}'(x)+\frac{2n}{x}BK_{2n}(x)+QK_{2n}(x) \right] \\
		&\, \left. + \frac{BK_{2n}(x)}{x}-\frac{K_{2n+1}(x)B}{x} \right\}.
	\end{split}
\end{equation}
We have introduced \( K_{-1}(x):=-K_0(x) \) to extend the formula \eqref{eqn:alphaev} to \( n=0 \). Due to the fact that the spherical Bessel functions are orthogonal in the interval \( ( -\infty, \infty ) \) ( cf. \cite[Formulas 10.1.2, 11.4.6]{Abra}) we conclude that
\begin{equation}\label{eqn:alphas0}
	\alpha_n^{(1)}=\alpha_n^{(2)}=0, \qquad n=0,1,\ldots,
\end{equation}
which gives us a coupled system of recursive differential equations for the coefficients \( K_n \), namely,
\begin{equation}\label{eqn:Kodd-rec}
	\begin{split}
		0=&  \frac{1}{4n+3} \left[ BK_{2n+1}'(x)-\frac{2n+2}{x}BK_{2n+1}(x)+QK_{2n+1}(x) \right] \\
		&\, -\frac{1}{4n-1} \left[ BK_{2n-1}'(x)+\frac{2n}{x}BK_{2n-1}(x)+Q(x)K_{2n-1}(x) \right] \\
		&\,  + \frac{BK_{2n+1}(x)}{x}-\frac{K_{2n}(x)B}{x},
	\end{split}
\end{equation}
and
\begin{equation}\label{eqn:Keven-rec}
	\begin{split}
		0 =  &  \frac{1}{4n+5} \left[BK_{2n+2}'(x)+\frac{2n+2}{x}BK_{2n+2}(x)+Q(x)K_{2n+2}(x) \right]   \\
		&\, \frac{1}{4n+1} \left[BK_{2n}'(x)+\frac{2n}{x}BK_{2n}(x)+QK_{2n}(x) \right] \\
		&\,  + \frac{BK_{2n}(x)}{x}-\frac{K_{2n+1}(x)B}{x} .
	\end{split}
\end{equation}
After applying the change of variables \( \theta_n=x^nK_n \) in eq. \eqref{eqn:Kodd-rec} and eq. \eqref{eqn:Keven-rec}, we get a single system of recursive equations, given by
\begin{equation}\label{eqn:diff-theta}
	\mathcal{A}_Q [\theta_n] = \frac{2n+1}{2n-3} \left[ x^2\mathcal{A}_Q[\theta_{n-2}]-(2n-3)xB\theta_{n-2}+(2n-3)\theta_{n-1}B  \right],
\end{equation}
which holds for all \( n \geq 1 \). Observe that 
\begin{equation}\label{eqn:thetan0}
	\theta_n(0)=0, 
\end{equation}
for \(n\geq 1\) since \( K_n(x) \) are continuous functions for \(x>0\) and bounded at \(x=0\). This follows from \eqref{eqn:Kncoeff}.
\begin{Rmk}
	Since \( Q\in C^1([0,b],\mathcal{M}_2) \), it can be shown using L'Hopital's rule that
	\[
	\theta_{-1}(x)=\frac{1}{x}K_{-1}(x)=-\frac{1}{x}K_0(x)\in C^1([0,b],\mathcal{M}_2).
	\]
\end{Rmk}

\subsection{Solving the system of recursive equations}
This subsection is devoted to the solution of the system \eqref{eqn:diff-theta}. We seek a matrix solution to this recursive system of differential equations that satisfies the initial condition \eqref{eqn:thetan0} for \( n \geq 1 \).  \\

It is well-known that the solution of an initial value problem of a non-homogeneous system of linear equations  can be written in terms of the fundamental matrix solution of the homogeneous system. We briefly recall here some of these facts for readability purposes, more details can be found in plenty of books, for instance \cite[Ex. 1.6.9 \& Section 1.8]{Miklavcic}. Let \( A \in  L^1([a,b],\mathcal{M}_d) \) be a \( d\times d \) matrix-valued function with integrable entries. Let \( X_0(t) \) be the solution of the initial-value problem
\begin{equation}\label{eqn:X0}
	X_0'(t)+A(t)X_0(t)=0, \quad X_0(a)=I,\, t\in [a,b].
\end{equation}
The function \(X_0(t)\) is called the fundamental matrix solution. Additionally, let \( Y_0(t) \) be the solution of the initial-value problem
\begin{equation}\label{eqn:adjoint}
	Y_0'(t)-A^*(t)Y_0(t)=0, \quad Y_0(a)=I, \, t\in [a,b],
\end{equation}
where \( A^* \) is the conjugate transpose of \( A \). It can be shown that \( X_0(t) \) is an invertible matrix, and its inverse is given by
\begin{equation}\label{eqn:X0inv}
	X_0^{-1}(t) = Y_0^*(t), \quad t\in [a,b].
\end{equation}
Then, the unique solution to the following initial-value problem
\begin{equation}
	X'(t)+A(t)X(t)=F(t), \quad  F \in L^1([a,b],\mathcal{M}_d), \quad   X(a)=X_a \in \mathcal{M}_d,
\end{equation}
is given by
\begin{equation}
	X(t)=X_0(t)X_a+X_0(t)\int_{a}^{t}Y_0^*(s)F(s)ds=X_0(t)X_a+X_0(t)\int_{a}^{t}X_0^{-1}(s)F(s)ds.
\end{equation}
Notice that multiplying the equation \( B \frac{dY}{dx}+ Q(x)Y=0  \) by \(B^T\), we get a system of the form \eqref{eqn:X0} with \(A=B^TQ\). Recall that \( U(0,x) \) is the fundamental matrix solution to the homogeneous system \( B \frac{dY}{dx}+ Q(x)Y=0  \), see eq. \eqref{eqn:transp}. The following proposition is a direct consequence of the previous results for linear systems of ODEs.
\begin{Prop}\label{prop:nh-dirac-s}
	The matrix solution to the non-homogeneous Dirac equation \( B \frac{dY}{dx}+Q(x)Y=H(x), H\in C([a, b], \mathcal{M}_2)\) with the initial condition \( Y(0)=0 \) is given by
	\begin{equation}
		Y=\mathcal S[H]
	\end{equation}
	where \( \mathcal S \) is the integral operator
	\begin{equation}
		\mathcal S[H](x):=U(0,x)\int_{0}^{x}U^{-1}(0, t)B^T H(t)dt.
	\end{equation}
\end{Prop}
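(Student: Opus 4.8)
The plan is to recognize the non-homogeneous equation $\mathcal{A}_Q Y = H$, that is $B\frac{dY}{dx}+Q(x)Y=H(x)$, as a standard first-order linear system and then apply the variation-of-parameters formula recalled just above. The only structural manipulation needed is to bring the equation into the normalized form \eqref{eqn:X0}, in which the coefficient of $Y'$ is the identity rather than $B$; here the interval starts at the origin, so we take $a=0$.

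First I would left-multiply the equation by $B^{T}$. Since $B$ is the rotation by $\pi/2$, it is orthogonal, and a direct computation gives $B^{T}B=I$. Hence the equation becomes
\[
\frac{dY}{dx}+B^{T}Q(x)\,Y=B^{T}H(x),
\]
which is exactly of the form $X'(t)+A(t)X(t)=F(t)$ with $A=B^{T}Q$ and $F=B^{T}H$, subject to $Y(0)=0$, i.e. $X_a=0$. Note that $A\in L^{1}$ and $F\in C\subset L^{1}$, so the hypotheses of the general theory are met.

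Next I would identify the fundamental matrix solution $X_0$ of the associated homogeneous problem. By construction $U(0,x)$ solves $BU'(0,x)+Q(x)U(0,x)=0$ with $U(0,0)=I$ (see eq. \eqref{eqn:transp}); multiplying this identity by $B^{T}$ shows that $U(0,\cdot)$ satisfies $X_0'+B^{T}Q\,X_0=0$ with $X_0(0)=I$, so $X_0=U(0,\cdot)$ is precisely the fundamental matrix solution appearing in \eqref{eqn:X0}. Substituting $X_a=0$, $F=B^{T}H$ and $X_0=U(0,\cdot)$ into the variation-of-parameters formula then yields
\[
Y(x)=U(0,x)\int_{0}^{x}U^{-1}(0,s)\,B^{T}H(s)\,ds=\mathcal{S}[H](x),
\]
which is the claimed representation.

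I do not expect a genuine obstacle here, since the statement is a direct specialization of the general linear-ODE theory: the entire content is the observation that $B^{T}B=I$ converts the left-hand side into canonical form, together with the identification of $U(0,x)$ as the fundamental matrix solution. The only points demanding care are verifying the normalization $U(0,0)=I$, so that the formula applies with $X_0(a)=I$, and invoking the invertibility of $U(0,x)$ guaranteed by \eqref{eqn:X0inv}, which justifies writing $U^{-1}(0,s)$ inside the integral.
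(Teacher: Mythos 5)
Your proof is correct and follows exactly the paper's own reasoning: the paper likewise multiplies by \(B^{T}\) (using \(B^{T}B=I\)) to bring the system into the normalized form \eqref{eqn:X0} with \(A=B^{T}Q\), identifies \(U(0,\cdot)\) as the fundamental matrix solution, and invokes the variation-of-parameters formula recalled just before the proposition. No gaps; you have simply written out explicitly the steps the paper compresses into ``a direct consequence of the previous results.''
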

\begin{Rmk}\label{rmk:Uinv}
	After computing \(U(0,x)\), we can obtain \( U^{-1}(0,x)\) without additional calculations. Indeed, due to the well-known formula for the inverse of a \(2\times 2\) matrix we have that
	\[
	U(0,x)=\begin{pmatrix}
		a(x) & b(x) \\ c(x)& d(x)
	\end{pmatrix}, \quad U^{-1}(0,x) = \frac{1}{\det U(0,x)} \begin{pmatrix}
		d(x) & -b(x) \\ -c(x)& a(x)
	\end{pmatrix}.
	\]
	Moreover, the determinant of the fundamental matrix solution \(X_0(t)\) of \eqref{eqn:X0} satisfies the following formula (see \cite[Lemma 1.5]{Hale})
	\[
	\det X_0(t)=\left[\det X_0(a)  \right] \exp\left(-\int_{a}^t \mathrm{tr} A(s)ds\right)=\exp\left(-\int_{a}^t \mathrm{tr} A(s)ds\right).
	\] 
	Since \( \mathrm{tr} \left[BQ(x)\right] =0\), \( \det U(0,x)=1\). Thus,
	\[
	U^{-1}(0,x) =  \begin{pmatrix}
		d(x) & -b(x) \\ -c(x)& a(x)
	\end{pmatrix}.
	\]
\end{Rmk}
We can also compute the value of the operator applied to the product of a scalar function with a matrix of the form \( \mathcal{A}_Q[H] \).
\begin{Lemma}\label{lemma:nonhsol}
	Let \( h \in C^1([0,b]) \) and \( H \in C^1([0,b], \mathcal{M}_2) \). Then,
	\begin{equation}
		\mathcal S[h(\cdot)\mathcal{A}_Q [H]](x)=h(x)H(x)-h(0)U(0,x)H(0)-\mathcal S[h'(\cdot)B H]
	\end{equation}
	In particular, if either \( h(0)=0 \) or \( H(0)=0 \) we have
	\begin{equation}
		\mathcal S[h(\cdot)\mathcal{A}_Q[H]](x)=h(x)H(x)-\mathcal S[h'(\cdot)B H]
	\end{equation}
\end{Lemma}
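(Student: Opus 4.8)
The plan is to exploit that, by Proposition \ref{prop:nh-dirac-s}, the operator $\mathcal S$ inverts $\mathcal{A}_Q$ on functions vanishing at the origin: for any $G\in C([0,b],\mathcal{M}_2)$, the matrix $\mathcal S[G]$ is the unique solution of $\mathcal{A}_Q[Y]=G$ with $Y(0)=0$. Hence it suffices to show that the proposed right-hand side
\[
W(x):=h(x)H(x)-h(0)U(0,x)H(0)-\mathcal S[h'(\cdot)BH](x)
\]
satisfies these two defining properties, and then invoke uniqueness to conclude $W=\mathcal S[h(\cdot)\mathcal{A}_Q[H]]$. Since $h\in C^1$ and $H\in C^1$, all three terms of $W$ are $C^1$ (the last being a Dirac solution), so $\mathcal{A}_Q[W]$ is well defined, and $h(\cdot)\mathcal{A}_Q[H]\in C([0,b],\mathcal{M}_2)$, so the left-hand side is defined as well.

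First I would verify the initial condition. Because $U(0,0)=I$ and $\mathcal S[\,\cdot\,](0)=0$ (the integral defining $\mathcal S$ runs from $0$ to $x$), evaluating at $x=0$ gives $W(0)=h(0)H(0)-h(0)H(0)-0=0$, as required.

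Next I would apply $\mathcal{A}_Q$ to each term of $W$. The key computation is the Leibniz-type identity $\mathcal{A}_Q[hH]=B(hH)'+Q(hH)=h'BH+h(BH'+QH)=h'BH+h\,\mathcal{A}_Q[H]$, which produces the crucial cross term $h'BH$. For the second term, since $H(0)$ is a constant matrix and $U(0,\cdot)$ solves the homogeneous equation $\mathcal{A}_Q[U(0,\cdot)]=0$, one gets $\mathcal{A}_Q[-h(0)U(0,\cdot)H(0)]=0$. For the third term, the inversion property yields $\mathcal{A}_Q[-\mathcal S[h'BH]]=-h'BH$. Summing, the two $h'BH$ contributions cancel and leave $\mathcal{A}_Q[W]=h\,\mathcal{A}_Q[H]$. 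Together with $W(0)=0$, uniqueness gives the first identity; the ``in particular'' statements follow at once by setting $h(0)=0$ or $H(0)=0$, which kills the term $h(0)U(0,x)H(0)$.

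An alternative, more computational route is to expand $\mathcal S[h\mathcal{A}_Q[H]](x)=U(0,x)\int_0^x U^{-1}(0,t)B^T h(t)\mathcal{A}_Q[H](t)\,dt$ and integrate by parts, using $B^TB=I$ together with $\frac{d}{dt}U^{-1}(0,t)=U^{-1}(0,t)B^TQ(t)$ (obtained by differentiating $U^{-1}U=I$ and the homogeneous equation for $U$); the boundary terms reproduce $h(x)H(x)-h(0)U(0,x)H(0)$, and the leftover integral reassembles as $-\mathcal S[h'BH]$ after again invoking $B^TB=I$. I expect the only delicate points to be bookkeeping ones — respecting the non-commutativity of the matrix factors throughout, and correctly identifying $U^{-1}(0,t)h'(t)H(t)$ with the integrand of $\mathcal S[h'BH]$ via $B^TB=I$. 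The verification route sidesteps these by differentiating rather than integrating, so I would present it as the main argument and relegate the integration-by-parts calculation to a remark.
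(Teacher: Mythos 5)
Your proof is correct, but its main argument takes a genuinely different route from the paper's. The paper proves the identity by direct computation: using \( B^TB=I \) and the adjoint-equation identity \( (U^{-1}(0,t))'=U^{-1}(0,t)B^TQ(t) \), it rewrites the integrand of \( \mathcal S[h(\cdot)\mathcal{A}_Q[H]] \) as \( h(t)\left[U^{-1}(0,t)H(t)\right]' \) and then integrates by parts, producing the boundary terms \( h(x)H(x)-h(0)U(0,x)H(0) \) and the leftover integral \( -\mathcal S[h'(\cdot)BH] \) --- this is exactly the ``alternative, more computational route'' you relegate to a remark. Your primary argument instead verifies that the proposed right-hand side \( W \) satisfies \( \mathcal{A}_Q[W]=h\,\mathcal{A}_Q[H] \) and \( W(0)=0 \), and concludes by uniqueness of solutions of the linear initial-value problem, which is implicit in Proposition \ref{prop:nh-dirac-s}. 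All of your individual steps check out: the Leibniz identity \( \mathcal{A}_Q[hH]=h'BH+h\,\mathcal{A}_Q[H] \) (valid because \( h \) is scalar), the annihilation \( \mathcal{A}_Q[U(0,\cdot)H(0)]=0 \) (right multiplication by the constant matrix \( H(0) \) commutes with the operator, which acts on the left), the inversion property \( \mathcal{A}_Q[\mathcal S[G]]=G \) for continuous \( G \), and the evaluation \( W(0)=0 \) from \( U(0,0)=I \) and \( \mathcal S[\cdot](0)=0 \). Comparing the two: your verification route trades the integration-by-parts bookkeeping (and any worry about non-commutativity of matrix factors) for an appeal to standard ODE uniqueness, making it arguably more robust and easier to check; the paper's route is constructive --- it derives the right-hand side rather than confirming a formula one must already know --- and stays entirely inside the explicit integral representation of \( \mathcal S \), never invoking uniqueness.
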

\begin{proof}
	First, since \( h \) is a scalar function, we have that
	\[
	\begin{split}
		U^{-1}(0,x)(B^T\left(h(x)\mathcal{A}_Q[H] \right)&= h(x)U^{-1}(0,x)B^T \left[ \left( BH'(x)+Q(x)H(x) \right) \right] \\
		&= h(x)\left[  \left( U^{-1}(0,x)H'(x)+U^{-1}(0,x)B^TQ(x)H(x) \right) \right].
	\end{split}
	\]
	It can be shown by means of the  adjoint problem (see eq. \eqref{eqn:adjoint} and eq. \eqref{eqn:X0inv}) that
	\[
	(U^{-1}(0,x))'=U^{-1}(0,x)B^TQ(x).
	\]
	Therefore,
	\[
	\begin{split}
		U^{-1}(0,x)(B^T\left(h(x)\mathcal{A}_Q[H] \right)&=h(x)\left[  \left( U^{-1}(0,x)H'(x)+(U^{-1}(0,x))'H(x) \right) \right] \\
		&=h(x) \left[ U^{-1}(0,x)H(x)\right]'.
	\end{split}
	\]
	Then, integration by parts yields
	\begin{align*}
		\mathcal S[h(\cdot)\mathcal{A}_Q[H]](x)&=U(0,x)\int_{0}^{x}U^{-1}(0,t)(B^T\left(h(t)\mathcal{A}_Q[H](t) \right)dt\\
		&=U(0,x)\int_{0}^{x}h(t) \left[ U^{-1}(0,t)H(t)\right]'dt\\
		&=h(x)H(x)-h(0)U(0,x)H(0)-\mathcal S[h'(\cdot)BH](x).
	\end{align*}
\end{proof}
We now state the main result of this work, which follows directly from proposition \ref{prop:nh-dirac-s} together with lemma \ref{lemma:nonhsol}.
\begin{Thm}\label{thm:thetan}
	Let \( Q \in C^1([0,b],\mathcal{M}_2)\). Then, the solution \(U(\lambda,x)\) of the Dirac equation \eqref{eqn:dirac} with initial condition \(U(\lambda,0)=I\) admits a representation in terms of a Neumann series of Bessel functions (NSBF) given by eq. \eqref{eqn:Unsbf}. Moreover, consider the matrix-valued functions \( \theta_n(x):=x^nK_n(x) \) where \(K_n\) are the coefficients from \eqref{eqn:Unsbf}. Then, the functions \(\theta_n(x)\) can be obtained by the following recursive procedure. The first functions are given by 
	\begin{align*}
		\theta_0(x)&=K_0(x)=\frac{1}{2} \left(U(0,x)-I\right), \\
		\theta_{-1}(x)&=-\frac{1}{x}K_0(x)=-\frac{1}{2x} \left(U(0,x)-I\right),
	\end{align*}
	and for \(n \geq 1\)
	\begin{equation}\label{eqn:thetan}
		\theta_n(x)=\frac{2n+1}{2n-3} \left[ x^2\theta_{n-2}+\mathcal S\left[ -(2n-1)xB\theta_{n-2}+(2n-3)\theta_{n-1}B \right] \right].
	\end{equation}
\end{Thm}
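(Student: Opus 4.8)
The statement has two parts: the NSBF representation \eqref{eqn:Unsbf} itself, and the recursive procedure for the coefficients $\theta_n$. The first part is already in hand from the preceding derivation: the transmutation relation \eqref{eqn:transp}, the uniform convergence of the Fourier-Legendre expansion \eqref{eqn:legendre} (guaranteed by $K\in C^1(\Omega^+,\mathcal{M}_2)$), and the closed-form integrals of $U_0(\lambda,t)P_n(t/x)$ against spherical Bessel functions together yield \eqref{eqn:Unsbf}. Thus the plan is to concentrate on the recursion \eqref{eqn:thetan}, which I would obtain by ``integrating'' the differential recursion \eqref{eqn:diff-theta} by means of the solution operator $\mathcal{S}$.

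The base cases are immediate: $\theta_0=K_0=\frac12(U(0,x)-I)$ is exactly Remark \ref{rmk:k0}, and $\theta_{-1}=x^{-1}K_{-1}=-x^{-1}K_0$ follows from the convention $K_{-1}:=-K_0$ together with the same formula. For $n\ge 1$, the crucial observation is that $\theta_n(0)=0$ by \eqref{eqn:thetan0}, so Proposition \ref{prop:nh-dirac-s} applies and gives $\theta_n=\mathcal{S}\bigl[\mathcal{A}_Q[\theta_n]\bigr]$. Feeding in the right-hand side of \eqref{eqn:diff-theta} and using the linearity of $\mathcal{S}$ (the scalar factor $\frac{2n+1}{2n-3}$ pulls out), I am left with the three terms $\mathcal{S}[x^2\mathcal{A}_Q[\theta_{n-2}]]$, $-(2n-3)\mathcal{S}[xB\theta_{n-2}]$, and $(2n-3)\mathcal{S}[\theta_{n-1}B]$.

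The one term that is not already in the desired form is $\mathcal{S}[x^2\mathcal{A}_Q[\theta_{n-2}]]$, and here Lemma \ref{lemma:nonhsol} does the work. Taking $h(x)=x^2$ and $H=\theta_{n-2}$, and noting that $h(0)=0$ so that the ``in particular'' case of the lemma holds regardless of the value $\theta_{n-2}(0)$, I obtain $\mathcal{S}[x^2\mathcal{A}_Q[\theta_{n-2}]]=x^2\theta_{n-2}-\mathcal{S}[2xB\theta_{n-2}]$. Substituting this and collecting the two multiples of $\mathcal{S}[xB\theta_{n-2}]$ gives the coefficient $-2-(2n-3)=-(2n-1)$, and reassembling under a single application of $\mathcal{S}$ produces precisely \eqref{eqn:thetan}.

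Two points deserve care. First, I must check the smoothness hypotheses of Lemma \ref{lemma:nonhsol}, i.e. that $\theta_{n-2}\in C^1$; for indices $n-2\ge 0$ this comes from the smoothness of $K$ (Proposition \ref{prop:smoothness}), while the edge case $n=1$ uses the Remark establishing $\theta_{-1}\in C^1([0,b],\mathcal{M}_2)$. Second, and this is the genuine obstacle rather than the bookkeeping above, the whole argument rests on the differential recursion \eqref{eqn:diff-theta} being valid and not merely formal. Justifying it requires showing that the series \eqref{eqn:Unsbf} may be differentiated term by term (uniform convergence of the differentiated series) and that the vanishing conclusion \eqref{eqn:alphas0} is legitimate, which rests on the linear independence of the spherical Bessel functions $\{j_k(\lambda x)\}_k$ viewed as functions of $\lambda$ on $(-\infty,\infty)$. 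Once \eqref{eqn:diff-theta} is secured, the passage to \eqref{eqn:thetan} is the short computation just described.
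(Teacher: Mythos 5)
Your proposal is correct and follows essentially the same route as the paper: the paper states that Theorem \ref{thm:thetan} ``follows directly from proposition \ref{prop:nh-dirac-s} together with lemma \ref{lemma:nonhsol},'' and your argument---applying $\mathcal{S}$ to \eqref{eqn:diff-theta} using $\theta_n(0)=0$, invoking the lemma with $h(x)=x^2$, $H=\theta_{n-2}$, and collecting $-2-(2n-3)=-(2n-1)$---is precisely the computation the paper leaves implicit. Your closing caveat about justifying the formal derivation (termwise differentiation and the vanishing of the $\alpha_n^{(i)}$) matches the paper's own plan, which defers that justification to the last proposition of Section \ref{sec:approx}.
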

\begin{Rmk}
	One can verify that the last theorem remains true for \( Q \in L^2([0,b],\mathcal{M}_2)\) by taking a sequence \( \{Q_n\}\in C^1([0,b],\mathcal{M}_2)\) that converges to \(Q\). 
\end{Rmk}
\subsection{The NSBF representation of the solution of the ZS-AKNS system}
We consider the \(2\times 2\) system
\begin{equation}\label{eqn:zs-akns}
	\frac{d V}{dx}=Q_{ZS}V+i\lambda \sigma_3 V, \quad V(\lambda,x)=\begin{pmatrix}
		v_1(\lambda, x) \\ v_2(\lambda, x)
	\end{pmatrix},
\end{equation}
where
\begin{equation}
	Q_{ZS}(x)=\begin{pmatrix}
		0 & \nu(x) \\ \overline{\nu(x)} & 0 
	\end{pmatrix}, \quad \sigma_3=\begin{pmatrix}
		1 & 0 \\ 0 & -1
	\end{pmatrix}, \quad \nu \in L^2([0,b],\mathbb{C}),
\end{equation}
and \(\overline{\nu}\) denotes complex conjugation. System \eqref{eqn:zs-akns} is known as the ZS-AKNS system. It was first studied by Zakharov and Shabat \cite{ZS1,ZS2} and Ablowitz, Kaup, Newell, and Segur \cite{AKNS}. Such system appears in the inverse scattering method for solving nonlinear Schrödinger equations. Recently, inverse scattering for ZS-AKNS systems on the half-line was studied in \cite{HrynivManko}. \\

Define \( p_\nu := \mathrm{Im }\,\nu, q_\nu:=- \mathrm{Re }\, \nu \), and let \(U_\nu(\lambda,x)\) be the matrix solution to \eqref{eqn:dirac} with initial condition \(U_\nu(\lambda,0)=I\) for the matrix potential
\[
Q_\nu=\begin{pmatrix}
	p_\nu & q_\nu \\ q_\nu & -p_\nu 
\end{pmatrix}.
\]
Then,
\begin{equation}
	Z(\lambda,x):=A^{-1}U_\nu(\lambda,x)A, \quad A:=\begin{pmatrix}
		i & -i \\ 1 & 1
	\end{pmatrix},
\end{equation}
is the solution to eq. \eqref{eqn:zs-akns} that satisfies the initial condition \(Z(\lambda,0)=I\), see \cite{HrynivManko}. Applying conjugation by A to the NSBF representation for \(U_\nu(\lambda,x)\) (whose coefficients we denote by \(K_{\nu,n}(x)\)) shows that \(Z(\lambda,x)\) admits the NSBF representation
\begin{equation}\label{eqn:Znsbf}
	\begin{split}
		Z(\lambda,x)&=A^{-1}U_0(\lambda,x)A+ 2\sum_{n=0}^\infty (-1)^n A^{-1}K_{\nu,n}(x)Aj_{2n}(\lambda x)\\
		&\, -2\sum_{n=0}^\infty(-1)^n A^{-1}K_{\nu,n}(x)BAj_{2n+1}(\lambda x).
	\end{split}
\end{equation}
The NBSF representation for \(Z(\lambda,x)\) can be used to solve the inverse scattering problem on the half-line for compactly supported potentials. Representation \eqref{eqn:Znsbf} generalizes the NSBF representation recently published in \cite{Lady} since \eqref{eqn:Znsbf} is valid for complex-valued potentials.

\section{Approximation results}\label{sec:approx}
This section is devoted to establish a variety of bounds regarding the convergence of the NSBF representation. We begin the section recalling the relation between the smoothness of the potential \(Q \) with the smoothness of the integral kernel \(K \). The following proposition establishes such relation.
\begin{Prop}\label{prop:smoothness}
	Let \( \Omega^+ = \{ (x,t): 0\leq x \leq b, \left\vert t \right\vert \leq x \} \). Then,
	\begin{enumerate}
		\item If \( Q \in C^r([0,b],\mathcal{M}_2) \) for some \(r \in {\mathbb{N}}_0. \) Then the integral kernel \( K \) satisfies
		\[
		K \in C^r(\Omega^+, \mathcal{M}_2).
		\]
		\item If \( Q \in W^{r,2}([0,b],\mathcal{M}_2) \) for some \(r \in {\mathbb{N}}_0. \) Then the integral kernel \( K \) satisfies
		\[
		K \in W^{r,2}(\Omega^+, \mathcal{M}_2).
		\]
	\end{enumerate}
\end{Prop}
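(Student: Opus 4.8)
The plan is to reduce the Goursat problem \eqref{eqn:pdek}--\eqref{eqn:goursatmx} to an equivalent system of Volterra integral equations in characteristic coordinates and then propagate the regularity of $Q$ through that system by induction on $r$. First I would pass to the characteristic variables $u=x+t$, $v=x-t$, under which $\Omega^+$ becomes the triangle $\{u,v\geq 0,\; u+v\leq 2b\}$; since this is an invertible affine change of variables it preserves both the $C^r$ and the $W^{r,2}$ classes, so it suffices to prove the statements for $K$ as a function of $(u,v)$. The key algebraic observation is that $Q$ anticommutes with $B$ (indeed $Q$ has the form $p\sigma_3+q\sigma_1$, which is exactly the anticommutant of $B$), whereas the commutant of $B$ is $\mathrm{span}\{I,B\}$. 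Writing $K=K^{c}+K^{a}$ for the decomposition of $K$ into its $B$-commuting and $B$-anticommuting parts, a short computation using $K_x=K_u+K_v$ and $K_t=K_u-K_v$ turns \eqref{eqn:pdek} into $\{B,K_u\}+[B,K_v]=-QK$, and projecting onto the two graded components decouples the principal part into the characteristic system $\partial_u K^{c}=\tfrac12 BQK^{a}$ and $\partial_v K^{a}=\tfrac12 BQK^{c}$. Under the same decomposition the Goursat conditions \eqref{eqn:goursatx}, \eqref{eqn:goursatmx} reduce to $K^{a}(u,0)=\tfrac12 BQ(u/2)$ and $K^{c}(0,v)=0$.

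Integrating the characteristic system along its characteristics gives the coupled Volterra equations $K^{c}(u,v)=\int_0^u \tfrac12 BQ(\tfrac{u'+v}{2})K^{a}(u',v)\,du'$ and $K^{a}(u,v)=\tfrac12 BQ(u/2)+\int_0^v \tfrac12 BQ(\tfrac{u+v'}{2})K^{c}(u,v')\,dv'$. Substituting one into the other yields a single Volterra equation whose iterated kernel is nilpotent on the bounded triangle; Picard iteration therefore converges uniformly, which re-establishes existence and simultaneously furnishes the base case $r=0$ of each statement (continuity of $K$ when $Q$ is continuous; $K\in L^2$ when $Q\in L^2$).

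For the $C^r$ statement I would argue by induction on $r$, the case $r=0$ being the convergence above. Assuming $K^{c},K^{a}\in C^{m-1}$ whenever $Q\in C^{m-1}$, take $Q\in C^m$. The two characteristic derivatives $\partial_u K^{c}$ and $\partial_v K^{a}$ are given pointwise by the products $\tfrac12 BQK^{a}$ and $\tfrac12 BQK^{c}$, which are $C^{m-1}$; the two transverse derivatives $\partial_v K^{c}$ and $\partial_u K^{a}$ are obtained by differentiating the integral equations, and after eliminating the $K$-derivatives inside the integrals by means of the characteristic relations, they become Volterra integrals of expressions built only from $Q,Q',K^{c},K^{a}$, hence again $C^{m-1}$. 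Since products and Volterra integration preserve $C^{m-1}$ jointly in $(u,v)$, all first partials of $K$ are $C^{m-1}$, so $K\in C^{m}$ and the induction closes.

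The $W^{r,2}$ statement rests on the same integral equations, but the bookkeeping is the delicate part and I expect it to be the main obstacle. Differentiating $r$ times, every derivative $Q^{(j)}$ with $j<r$ is, by the one-dimensional embedding $W^{1,2}\hookrightarrow C$, a bounded multiplier, and the continuous function $K$ is bounded, so the only genuinely $L^2$ contribution is that of the top derivative $Q^{(r)}$, which enters linearly through the differentiated data term $\tfrac12 BQ(u/2)$; the Volterra structure then lets one solve the resulting $L^2$ equation for $\partial^r K$ and conclude $\partial^r K\in L^2(\Omega^+)$, the lower-order derivatives being continuous and hence in $L^2$ on the bounded triangle. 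The subtlety is that $W^{r,2}$ on the two-dimensional triangle is not a multiplication algebra, so the products $Q^{(j)}K$ must be controlled by hand; this is possible precisely because $Q$ is a function of the single variable $x=(u+v)/2$, so its sub-top derivatives are bounded. Alternatively, one may deduce part~2 from part~1 by approximating $Q$ in $W^{r,2}$ by smooth potentials and proving $W^{r,2}$-stability of the solution map $Q\mapsto K$ via the same Volterra estimates.
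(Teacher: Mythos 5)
Your proof is correct, but it follows a genuinely different route from the paper, whose entire proof is a citation: part 1 is attributed to \cite[Theorem A.2, Proposition B.1]{Nelson-analytic}, and part 2 is declared ``similar''. The cited argument (its key formulas are reproduced in the paper's proof of Proposition \ref{prop:uboundK}) also works in characteristic variables, but it decouples the Goursat problem by differentiating once more, arriving at a second-order wave-type equation $H_{\xi\eta}=G_Q H$ with $G_Q=BQ'+Q^2$, together with Volterra representations for $H_\xi$ and $H_\eta$; the regularity of $K$ is then inherited from that of $G_Q$, at the price of one derivative of $Q$. You instead use the grading of $\mathcal{M}_2$ into the commutant and anticommutant of $B$ (legitimate, since $Q=p\sigma_3+q\sigma_1$ indeed anticommutes with $B$), which splits the first-order system directly into $\partial_u K^c=\tfrac12 BQK^a$, $\partial_v K^a=\tfrac12 BQK^c$ with data $K^a(u,0)=\tfrac12 BQ(u/2)$, $K^c(0,v)=0$; I checked the algebra, including the signs (using $B^{-1}=-B$), and it is correct. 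The payoff of your version is that the integral equations contain $Q$ itself rather than $Q'$, so the case $r=0$ (continuous, or merely $L^2$, potential) comes out of the same Picard iteration with no extra argument, and the induction on $r$ closes cleanly; the cited approach needs $Q$ to be at least differentiable even to write down $G_Q$. Two minor imprecisions, neither fatal: a Volterra operator is quasi-nilpotent rather than nilpotent; and in part 2 the top derivative $Q^{(r)}$ enters not only through the differentiated data term $\tfrac12 BQ(u/2)$ but also inside the integrals, through Leibniz terms of the form $\int_0^u Q^{(r)}(\tfrac{u'+v}{2})K^a(u',v)\,du'$ --- these are harmless, being uniformly bounded by $2\sqrt{b}\,\Vert Q^{(r)}\Vert_{L^2(0,b)}\Vert K\Vert_{C(\Omega^+,\mathcal{M}_2)}$, but your claim that the data term is ``the only genuinely $L^2$ contribution'' should be amended to account for them. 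Your closing suggestion (approximate $Q$ in $W^{r,2}$ by smooth potentials and prove stability of the map $Q\mapsto K$ in the Volterra estimates) is the cleanest way to make the weak-derivative bookkeeping rigorous, and it matches the spirit of the remark following Theorem \ref{thm:thetan}.
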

\begin{proof}
	Part (a) is given in \cite[Theorem A.2, Proposition B.1]{Nelson-analytic}. The proof of part (b) is similar.
\end{proof}
We first need an auxiliary result regarding the spectral norm of \( U_0(\lambda, t) \). A direct computation shows that the following holds true.
\begin{Prop}\label{prop:U0norm}
	Let \( U_0(\lambda, t) \) be given by eq. \eqref{eqn:u0}. Then,
	\begin{equation}
		\left\vert U_0(\lambda,t) \right\vert = e^{\left\vert \operatorname{Im} \lambda \right\vert \left\vert t \right\vert}.
	\end{equation}
\end{Prop}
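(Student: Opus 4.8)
The plan is to compute the operator norm of $U_0(\lambda,t)$ as the square root of the largest eigenvalue of the Hermitian matrix $U_0^*(\lambda,t)U_0(\lambda,t)$, since the operator norm induced by the Euclidean norm of $\mathbb{C}^2$ is the largest singular value.

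First I would set $M=U_0(\lambda,t)$, write $c=\cos\lambda t$ and $s=\sin\lambda t$, and form $M^*M$; a short multiplication gives the diagonal entry $a:=|c|^2+|s|^2$ and the off-diagonal entries $\pm d$ with $d:=\bar s c-\bar c s$. The key structural observation is that $a$ is real while $d$ is purely imaginary, i.e. $\bar d=-d$, so that $M^*M=\left(\begin{smallmatrix} a & d \\ -d & a\end{smallmatrix}\right)$. Its characteristic polynomial is $(\mu-a)^2+d^2=(\mu-a)^2-|d|^2$, whence the eigenvalues are $a\pm|d|$ and the operator norm equals $\sqrt{a+|d|}$.

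The second step is the explicit evaluation of $a$ and $|d|$ for complex $\lambda$. Writing $\lambda=\operatorname{Re}\lambda+i\operatorname{Im}\lambda$ and using
\[
\cos\lambda t=\cos(\operatorname{Re}\lambda\,t)\cosh(\operatorname{Im}\lambda\,t)-i\sin(\operatorname{Re}\lambda\,t)\sinh(\operatorname{Im}\lambda\,t)
\]
together with the analogous expression for $\sin\lambda t$, the Pythagorean identity collapses the trigonometric dependence and leaves $a=\cosh(2\operatorname{Im}\lambda\,t)$ and $|d|=\bigl|\sinh(2\operatorname{Im}\lambda\,t)\bigr|$. Finally the identity $\cosh\xi+|\sinh\xi|=e^{|\xi|}$ yields $a+|d|=e^{2|\operatorname{Im}\lambda|\,|t|}$, and taking the square root gives the claim.

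The computation has no genuine obstacle; the only delicate point is the bookkeeping with the complex-argument identities, where one must track real versus imaginary parts carefully to confirm that $d$ is purely imaginary and that the cross terms $\sin(\operatorname{Re}\lambda\,t)\cos(\operatorname{Re}\lambda\,t)$ cancel in $d$. As a cleaner alternative I would note that $B^2=-I$ gives $U_0(\lambda,t)=e^{-\lambda t B}$; since $\lambda B$ is normal, so is its exponential, and for a normal matrix the operator norm equals the spectral radius. The eigenvalues of $-\lambda t B$ are $\pm i\lambda t$, with real parts $\pm\operatorname{Im}\lambda\,t$, so $\Vert U_0(\lambda,t)\Vert=e^{|\operatorname{Im}\lambda|\,|t|}$, recovering the same result.
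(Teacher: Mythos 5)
Your proof is correct and is essentially the paper's approach: the paper offers no argument beyond the phrase ``a direct computation shows,'' and your singular-value calculation is exactly that computation carried out in full, with $U_0^*U_0$ having eigenvalues $\cosh(2\operatorname{Im}\lambda\,t)\pm\vert\sinh(2\operatorname{Im}\lambda\,t)\vert$ and hence $\vert U_0(\lambda,t)\vert=e^{\vert\operatorname{Im}\lambda\vert\vert t\vert}$. Your alternative argument, that $U_0(\lambda,t)=e^{-\lambda t B}$ is the exponential of a normal matrix so its norm equals its spectral radius, is a cleaner route to the same identity.
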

\begin{Rmk}\label{rmk:norms}
	The expansion of the transmutation kernel in terms of the Legendre polynomials in eq. \eqref{eqn:legendre} is entry-wise, that is, we are actually expanding each of the entries \( \kappa_{ij}(x,t) \) in \eqref{eqn:kappan}. However, since we are working with \( \mathcal{M}_2 \)-valued functions, it is natural to give bounds in terms of the operator norm \( \left\vert \cdot \right\vert \) induced by the Euclidean norm in \( \mathbb{C}^2 \). Given a \( d\times d \) matrix \( A \in \mathcal{M}_d \), it is well-known the following equivalence of matrix norms (see \cite[Section 5.6, Problem 23]{matrix})
	\begin{equation}
		d \max_{1\leq i,j \leq d} \left\vert a_{ij} \right\vert \leq d \left\vert A \right\vert \leq d^2 \max_{1\leq i,j \leq d} \left\vert a_{ij} \right\vert.
	\end{equation}
	Thus, adapting known bounds for complex-valued functions to bounds for matrix-valued functions is straightforward.
\end{Rmk}
Now we give a pair of estimates for the pointwise rate of convergence of the truncated series which are analogous to the results \cite[Thm. 3.3, Thm. 4.1]{nsbf}. We denote the partial sum of the series \eqref{eqn:legendre} by
\begin{equation}
	K^N(x,t):=\sum_{n=0}^{N}\frac{1}{x}K_n(x)P_n\left( \frac{t}{x} \right).
\end{equation}
\begin{Thm}
	Let us assume that \( Q \in C^{p+1}([0,b], \mathcal{M}_2) \). Set
	\[
	M:= \max_{0 \leq x \leq b, \vert t \vert \leq x} \left\vert \partial_t^{p+1} K(x,t) \right\vert.
	\]
	Then, for \( N>p+1\)
	\begin{equation}\label{eqn:Knabs-estimate}
		\left\vert K(x,t)-K^N(x,t) \right\vert \leq \epsilon_N(x), 
	\end{equation}
	where
	\begin{equation}
		\epsilon_N(x):=\frac{C_{p,\mathcal{M}_2} M x^{p+1}}{N^{p+1/2}},
	\end{equation}
	and the constant \( C_{p,\mathcal{M}_2}  \) does not depend on neither \( Q \) or \( N \).
\end{Thm}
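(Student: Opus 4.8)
The plan is to reduce the matrix-valued pointwise bound to a one-dimensional Fourier--Legendre truncation estimate on the reference interval \([-1,1]\), and then to obtain the sharp rate by combining Jackson's theorem with the known growth of the Lebesgue constants of Legendre expansions. The whole argument is really one-dimensional; the matrix structure enters only through the norm equivalence of Remark~\ref{rmk:norms}.

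First I would pass from matrices to scalars. By Remark~\ref{rmk:norms} one has \(|K(x,t)-K^N(x,t)|\le 2\max_{i,j}|\kappa_{ij}(x,t)-\kappa_{ij}^N(x,t)|\), so it suffices to prove the estimate for each scalar entry \(\kappa=\kappa_{ij}\) with \(M\) replaced by \(M_{ij}:=\max|\partial_t^{p+1}\kappa_{ij}|\), which by the same equivalence is controlled by a constant multiple of \(M\); the hypothesis \(Q\in C^{p+1}\) guarantees \(K\in C^{p+1}(\Omega^+)\) by Proposition~\ref{prop:smoothness}, so each \(\kappa_{ij}(x,\cdot)\in C^{p+1}\) and \(M\) is finite. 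Fixing \(x\in(0,b]\) and setting \(g(s):=\kappa(x,xs)\) for \(s\in[-1,1]\), the change of variable \(t=xs\) turns \eqref{eqn:Kncoeff} into the statement that \(\tfrac1x K_n(x)\) is exactly the \(n\)-th normalized Fourier--Legendre coefficient of \(g\); hence \(K(x,t)-K^N(x,t)=g(s)-S_N g(s)\) with \(s=t/x\), where \(S_N\) denotes the Legendre partial-sum operator onto polynomials of degree \(\le N\). Moreover \(g^{(p+1)}(s)=x^{p+1}(\partial_t^{p+1}\kappa)(x,xs)\), so \(\max_{[-1,1]}|g^{(p+1)}|\le x^{p+1}M_{ij}\); this is precisely where the factor \(x^{p+1}\) in \(\epsilon_N\) originates.

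Next I would estimate \(\max_{[-1,1]}|g-S_N g|\) by the Lebesgue-lemma argument. Since \(S_N\) reproduces every polynomial of degree \(\le N\), for any such polynomial \(q\) one has \(g-S_N g=(g-q)-S_N(g-q)\), whence
\[
\max_{[-1,1]}|g-S_N g|\;\le\;(1+\Lambda_N)\,E_N(g),
\]
where \(\Lambda_N=\max_s\int_{-1}^1|D_N(s,\tau)|\,d\tau\) is the operator norm of \(S_N\) on \(C[-1,1]\) (the Lebesgue constant of the Legendre expansion, \(D_N\) being the reproducing/Christoffel--Darboux kernel) and \(E_N(g)\) is the error of best uniform polynomial approximation of degree \(\le N\). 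I would then invoke two classical one-dimensional ingredients: the Legendre Lebesgue constants grow like \(\Lambda_N=O(\sqrt N)\), and Jackson's theorem gives \(E_N(g)\le C_p N^{-(p+1)}\max_{[-1,1]}|g^{(p+1)}|\) for \(N>p+1\). Multiplying these reproduces \(\max|g-S_N g|\le C_p N^{-(p+1/2)}\max|g^{(p+1)}|\le C_p M_{ij}\,x^{p+1}N^{-(p+1/2)}\); restoring the matrix norm through Remark~\ref{rmk:norms} and absorbing all dimension- and \(p\)-dependent constants into a single \(C_{p,\mathcal{M}_2}\) (independent of \(Q\) and \(N\)) yields \eqref{eqn:Knabs-estimate}.

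The delicate point is the exponent \(p+1/2\) itself. A naive bound that integrates by parts \(p+1\) times to show \(|\tfrac1x K_n(x)|=O\!\big(x^{p+1}M\,n^{-(p+1/2)}\big)\) and then sums the tail using only \(|P_n|\le 1\) loses a full power of \(N\) and produces the weaker rate \(N^{-(p-1/2)}\); the missing power can be recovered only by exploiting cancellation among the Legendre polynomials. The Lebesgue-lemma route captures exactly this cancellation, since the \(O(\sqrt N)\) Lebesgue constant paired with the \(O(N^{-(p+1)})\) Jackson rate returns the coefficient-decay exponent \(N^{-(p+1/2)}\) for the truncation error rather than the crude tail-sum exponent. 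Thus the main work is not a long computation but marshalling the correct forms of the two classical results on \([-1,1]\) — Jackson's theorem and the square-root growth of the Legendre Lebesgue constants — and checking that the constants they provide depend on neither the potential \(Q\) nor \(N\).
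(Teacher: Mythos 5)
Your proof is correct and is essentially the paper's own argument: the paper's proof is a one-line deferral to the proof of \cite[Thm.~3.3]{nsbf} combined with the matrix--scalar norm equivalence of Remark~\ref{rmk:norms}, and that cited proof follows exactly your route --- rescale $K(x,\cdot)$ to $[-1,1]$ so that the factor $x^{p+1}$ arises by the chain rule, then bound the Fourier--Legendre truncation error by pairing Jackson's inequality ($O(N^{-(p+1)})$) with the $O(\sqrt{N})$ growth of the Legendre Lebesgue constant. The only difference is presentational: you unpack the classical Legendre approximation estimate into these two ingredients (and correctly note why the naive coefficient-tail bound would lose a power of $N$), whereas the source the paper cites invokes the combined estimate directly.
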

\begin{proof}
	The proof is analogous to the proof of \cite[Thm. 3.3]{nsbf} with a straightforward modification to adapt the argument to \( \mathcal{M}_2 \)-valued functions, see remark \ref{rmk:norms}.
\end{proof}

Rewrite the series in eq. \eqref{eqn:Unsbf} as
\begin{equation}\label{eqn:Unsbftilde}
	U(\lambda,x)=U_0(\lambda,x)+\sum_{n=0}^{\infty}\tilde{K}_n(x)j_n(\lambda x),
\end{equation}
where
\begin{equation}
	\tilde{K}_n(x):=
	\begin{cases}
		2(-1)^{n/2} K_n(x) & n \text{ even}, \\
		2(-1)^{(n+1)/2}K_n(x)B & n \text { odd}.
	\end{cases}
\end{equation}
Denote the \(N\)-th partial sum of the series \eqref{eqn:Unsbftilde} by \( U^N(\lambda,x) \), that is
\begin{equation}\label{eqn:Uapprox}
	U^N(\lambda, x):=U_0(\lambda, x) + \sum_{n=0}^{N} \tilde{K}_n(x) j_n(\lambda x).
\end{equation}
The following result shows a remarkable property of the partial sums of the NSBF representation, which is the uniform approximation of the truncated series with respect to the spectral parameter for real values of \( \lambda \).
\begin{Thm}\label{thm:UnCp} Assume \( Q\in C^{p+1}([0,b],\mathcal{M}_2)\).
	\leavevmode
	\begin{enumerate}[(a)]
		\item Let \( \lambda \in \mathbb{R} \). Then,
		\begin{equation}
			\left\vert U(\lambda,x)-U^N(\lambda, x) \right\vert \leq 2 x \epsilon_N(x).
		\end{equation}
		\item For \( \lambda \in \mathbb{C} \) with \( \operatorname{Im} \lambda \neq 0 \),
	\end{enumerate}
	\begin{equation}
		\left\vert U(\lambda,x)-U^N(\lambda, x) \right\vert \leq 2 \epsilon_N(x) \frac{e^{\left\vert \operatorname{Im} \lambda \right\vert x}-1}{\left\vert \operatorname{Im} \lambda \right\vert}.
	\end{equation}
\end{Thm}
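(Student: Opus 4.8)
The plan is to reduce the estimate to the kernel bound \eqref{eqn:Knabs-estimate} already established for $\left\vert K(x,t)-K^N(x,t)\right\vert$, by recognizing that the partial sum $U^N(\lambda,x)$ is exactly what one obtains upon replacing the full transmutation kernel $K$ by its truncated Fourier--Legendre expansion $K^N$ in the integral representation \eqref{eqn:transp}.

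First I would verify, term by term, that the definition \eqref{eqn:Uapprox} of $U^N$ coincides with
\[
U^N(\lambda,x) = U_0(\lambda,x) + \int_{-x}^{x} K^N(x,t)\, U_0(\lambda,t)\,dt.
\]
This is precisely the chain of identities that produced \eqref{eqn:Unsbf}: the even-index contribution $\frac{1}{x}K_{2m}(x)\int_{-x}^{x}U_0(\lambda,t)P_{2m}(t/x)\,dt$ equals $2(-1)^m K_{2m}(x)j_{2m}(\lambda x)=\tilde{K}_{2m}(x)j_{2m}(\lambda x)$, and the odd-index contribution equals $\tilde{K}_{2m+1}(x)j_{2m+1}(\lambda x)$; summing for $n=0,\dots,N$ gives exactly the integral of $K^N$ against $U_0$. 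Subtracting this from the representation \eqref{eqn:transp} of $U(\lambda,x)$ then yields
\[
U(\lambda,x) - U^N(\lambda,x) = \int_{-x}^{x} \bigl(K(x,t) - K^N(x,t)\bigr)\, U_0(\lambda,t)\,dt.
\]

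Second, I would pass to norms under the integral sign and insert the two ingredients already available. The kernel estimate \eqref{eqn:Knabs-estimate} gives $\left\vert K(x,t)-K^N(x,t)\right\vert \le \epsilon_N(x)$ uniformly in $t$, and Proposition \ref{prop:U0norm} gives $\left\vert U_0(\lambda,t)\right\vert = e^{\left\vert\operatorname{Im}\lambda\right\vert\left\vert t\right\vert}$. Hence
\[
\left\vert U(\lambda,x) - U^N(\lambda,x)\right\vert \le \epsilon_N(x)\int_{-x}^{x} e^{\left\vert\operatorname{Im}\lambda\right\vert\left\vert t\right\vert}\,dt.
\]
For part (a), taking $\lambda\in\mathbb{R}$ makes the exponential identically $1$, so the integral equals $2x$, giving the bound $2x\,\epsilon_N(x)$. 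For part (b), using the evenness of the integrand, $\int_{-x}^{x} e^{\left\vert\operatorname{Im}\lambda\right\vert\left\vert t\right\vert}\,dt = 2\int_{0}^{x} e^{\left\vert\operatorname{Im}\lambda\right\vert t}\,dt = 2\bigl(e^{\left\vert\operatorname{Im}\lambda\right\vert x}-1\bigr)/\left\vert\operatorname{Im}\lambda\right\vert$, which yields the stated estimate.

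The only genuine content lies in the reduction carried out in the first step; once the difference is written as an integral of $K-K^N$ against $U_0$, everything else is an elementary computation. The main point to be careful about is the bookkeeping of the even/odd reindexing that identifies the Bessel partial sum with the truncated-kernel integral, together with the observation that the bound $\epsilon_N(x)$ from the preceding theorem is uniform in $t\in[-x,x]$ and therefore factors out of the integral. Notably, no interchange of a limit with the integral is required, since we work directly with the finite truncation $K^N$ rather than with the full series.
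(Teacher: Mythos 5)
Your proof is correct and follows essentially the same route as the paper: both reduce the difference to $\int_{-x}^{x}\bigl(K(x,t)-K^N(x,t)\bigr)U_0(\lambda,t)\,dt$, apply the uniform kernel bound $\epsilon_N(x)$ together with Proposition \ref{prop:U0norm}, and evaluate the resulting exponential integral. The only difference is that you spell out the identification of $U^N$ with the truncated-kernel integral, a step the paper's proof uses implicitly.
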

\begin{proof}
	Due to proposition \ref{prop:U0norm}, if \( \operatorname{Im} \lambda \neq 0 \) we have that
	\begin{align*}
		\left\vert U(\lambda, x)-U^N(\lambda, x) \right\vert &\leq \int_{-x}^{x} \left\vert K(x,t)-K^N(x,t) \right\vert \left\vert U_0(\lambda,t) \right\vert dt \\
		&\leq \epsilon_N(x) \int_{-x}^{x} \left\vert U_0(\lambda, t) \right\vert dt \\
		&\leq 2 \epsilon_N(x) \int_{0}^{x} e^{\left\vert \operatorname{Im} \lambda \right\vert t} dt = 2 \epsilon_N(x)\frac{e^{\left\vert \operatorname{Im} \lambda \right\vert x}-1}{\left\vert \operatorname{Im} \lambda \right\vert}.
	\end{align*}
	This shows that (b) holds true. If \( \lambda \in \mathbb{R} \),  the proof of part (a) is analogous to the proof of part (b) using that \( \left\vert U_0(\lambda,t) \right\vert =1 \).
\end{proof}
We can improve the rate of convergence of the NSBF series if we utilize the \(L^2\) norm in the estimate \eqref{eqn:Knabs-estimate}. We can even weaken the condition \( Q \in C^{p+1}([0,b], \mathcal{M}_2) \), and instead work with \(Q\) in the Sobolev space \( W^{p+1,2}([0,b], \mathcal{M}_2) \). The following theorem presents an approximation result of the Fourier-Legendre series for \( f \in W^{p+1,2}([-1,1], \mathbb{C}) \), which follows directly from \cite[Theorem 1]{Ky92}.
\begin{Thm}
	Let \( f \in W^{p+1,2}([-1,1], \mathbb{C}) \), \(p \geq 0\). Denote \( \Delta(x):=\sqrt{1-x^2} \). Additionally, define the remainder of the best polynomial approximation of \( f\) in \(L^2[-1,1]\) as 
	\[E_{2,N}(f):=\inf_{p\in \Pi_N} \Vert f - p \Vert_{L_2(-1,1)},\]
	where \( \Pi_N\) is the set of all algebraic polynomials of degree at most \(N\). Then 
	\begin{equation}
		E_{2,N}(f)\leq  \frac{C_1(p)}{N^{p+1}} \Vert \Delta^{p+1} f^{(p+1)} \Vert_{L_2(-1,1)},
	\end{equation}
	and the constant \( C_1(p)\) is independent of \( f, N \).
\end{Thm}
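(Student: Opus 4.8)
The plan is to identify the best $L^2$ polynomial approximation with the truncated Fourier–Legendre expansion and then to control the tail of the Legendre coefficients by the weighted Sobolev norm on the right-hand side. Writing the orthonormal Legendre basis $\tilde P_k=\sqrt{(2k+1)/2}\,P_k$ and $f=\sum_{k\ge 0}a_k\tilde P_k$ with $a_k=\langle f,\tilde P_k\rangle$, the polynomial of degree at most $N$ nearest to $f$ in $L^2(-1,1)$ is the partial sum $\sum_{k\le N}a_k\tilde P_k$, so that by Parseval $E_{2,N}(f)^2=\sum_{k>N}|a_k|^2$. Everything therefore reduces to bounding this tail by $\|\Delta^{p+1}f^{(p+1)}\|_{L^2}$.

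The core of the argument is an exact identity expressing the weighted norm $\|\Delta^{m}f^{(m)}\|_{L^2}$, with $m:=p+1$, as a weighted $\ell^2$-sum of the $a_k$. I would invoke the classical differentiation formula for Jacobi polynomials
\[
\frac{d^m}{dx^m}P_k^{(0,0)}=\frac{(k+m)!}{2^m\,k!}\,P_{k-m}^{(m,m)},
\]
together with the orthogonality of the Jacobi polynomials $\{P_{k-m}^{(m,m)}\}_{k\ge m}$ in $L^2((1-x^2)^m\,dx)=L^2(\Delta^{2m}\,dx)$. Since $\|\Delta^m f^{(m)}\|_{L^2(dx)}=\|f^{(m)}\|_{L^2(\Delta^{2m}\,dx)}$, expanding $f^{(m)}$ term by term in this Jacobi basis and applying Parseval yields
\[
\|\Delta^{m}f^{(m)}\|_{L^2}^2=\sum_{k\ge m}\mu_{k,m}\,|a_k|^2,\qquad \mu_{k,m}=\frac{(k+m)!}{(k-m)!}=(k-m+1)\cdots(k+m),
\]
where $\mu_{k,m}$ is obtained by combining the prefactor above with the explicit Jacobi norm $h_{k-m}^{(m,m)}=\|P_{k-m}^{(m,m)}\|_{L^2(\Delta^{2m})}^2$; the factorials telescope to a product of $2m$ consecutive integers. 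As a sanity check, $m=1$ gives $\mu_{k,1}=k(k+1)$ and $\|\Delta f'\|^2=\sum k(k+1)|a_k|^2$, which is also visible from the factorization $-\frac{d}{dx}\bigl((1-x^2)\frac{d}{dx}\bigr)=\bigl(\Delta\frac{d}{dx}\bigr)^{*}\bigl(\Delta\frac{d}{dx}\bigr)$ of the Legendre operator, whose eigenvalues are $k(k+1)$.

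With this identity in hand the estimate is immediate. Since $\mu_{k,m}$ is increasing in $k$ for $k\ge m$, for $N\ge m$ the tail is bounded by its smallest weight,
\[
E_{2,N}(f)^2=\sum_{k>N}|a_k|^2\le\frac{1}{\mu_{N+1,m}}\sum_{k>N}\mu_{k,m}|a_k|^2\le\frac{\|\Delta^m f^{(m)}\|_{L^2}^2}{\mu_{N+1,m}}.
\]
Here $\mu_{N+1,m}=(N+2-m)\cdots(N+1+m)\ge (N+2-m)^{2m}$, which is of order $N^{2(p+1)}$ as $N\to\infty$; taking square roots gives $E_{2,N}(f)\le C_1(p)\,N^{-(p+1)}\|\Delta^{p+1}f^{(p+1)}\|_{L^2}$ with a constant depending only on $p$, the finitely many cases $N\le m$ being absorbed into $C_1(p)$.

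The step I expect to be the main obstacle is the rigorous justification of the weighted identity under the sole hypothesis $f\in W^{p+1,2}$: one must legitimize the term-by-term differentiation of the Legendre series and verify that no boundary contributions appear. The cleanest route is to compute the Jacobi coefficients of $f^{(m)}$ directly, integrating by parts $m$ times against $(1-x^2)^m P_{k-m}^{(m,m)}$ and using the dual Rodrigues-type relation $\frac{d^m}{dx^m}\bigl[(1-x^2)^m P_{k-m}^{(m,m)}\bigr]\propto P_k$; every intermediate weight carries a factor $(1-x^2)^j$ with $j\ge 1$, which vanishes at $x=\pm1$ and kills all boundary terms, while completeness of the Jacobi system in $L^2(\Delta^{2m})$ together with a density argument (approximating $f$ by smooth functions) secures the Parseval equality. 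The remaining bookkeeping—the explicit evaluation of $\mu_{k,m}$ and its asymptotics—is routine.
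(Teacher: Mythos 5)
Your proposal is correct in its core argument, but it takes a genuinely different route from the paper: the paper does not prove this theorem at all --- it is stated as following \emph{directly} from Theorem 1 of Ky \cite{Ky92}, a general Jackson-type estimate for weighted polynomial approximation, so the ``proof'' there is a citation. You instead give a self-contained Hilbert-space argument, and the computations check out: with the differentiation formula $\frac{d^m}{dx^m}P_k=\frac{(k+m)!}{2^m k!}P^{(m,m)}_{k-m}$ and the explicit Jacobi norm $h^{(m,m)}_{k-m}=\frac{2^{2m+1}}{2k+1}\frac{(k!)^2}{(k+m)!\,(k-m)!}$, one gets $\Vert \Delta^m P_k^{(m)}\Vert_{L_2(-1,1)}^2=\frac{2}{2k+1}\frac{(k+m)!}{(k-m)!}$, so the orthonormalized weights are exactly your $\mu_{k,m}=(k+m)!/(k-m)!$, and your $m=1$ sanity check against the Legendre operator eigenvalues $k(k+1)$ is right. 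The tail estimate then follows since $\mu_{N+1,m}\geq (N+2-m)^{2m}\geq c(p)N^{2(p+1)}$ for $N\gtrsim m$. What your approach buys is an elementary proof with explicit, traceable constants; what the paper's citation buys is generality (Ky's result covers $L^p$ and general Jacobi weights via moduli of smoothness) at zero cost. One simplification you should notice: for the inequality you only need Bessel's inequality in $L^2(\Delta^{2m}dx)$, not completeness or Parseval, so the justification step you flag as the main obstacle is lighter than you think --- compute the Jacobi coefficients of $f^{(m)}$ by integrating by parts $m$ times against $(1-x^2)^m P^{(m,m)}_{k-m}$ (legitimate for $f\in W^{m,2}$ since $f^{(m-1)}$ is absolutely continuous, and the weight kills all boundary terms), identify them with $a_k$ up to the known constants, and apply Bessel.

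One genuine slip: the final remark that ``the finitely many cases $N\le m$'' can be ``absorbed into $C_1(p)$'' is false for $N<p$. If $f$ is a polynomial of degree exactly $p$ and $N<p$, the right-hand side vanishes while $E_{2,N}(f)>0$, so no constant rescues the inequality there. This is an imprecision already present in the theorem as stated (no restriction on $N$ appears), but the paper only ever invokes it for $N>p+1$ (see proposition \ref{prop:L2normK-KN}), and your main argument does cover all $N\geq p$ since then every tail index $k>N$ satisfies $k\geq m$ and $\mu_{N+1,m}$ is well defined. So simply state the result for $N\geq p$ (or $N>p+1$, matching its use) and delete the absorption remark; as written, that sentence claims something provably false.
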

Naturally, if we denote \( S^N_f(x) = \sum_{n=0}^{N}a_n {P}_n(x)  \) the \(N \)-th partial sum of the Fourier-Legendre series of a function \( f\in W^{2,p+1}([-1,1], \mathbb{C}) \) we have that 
\begin{equation}\label{eqn:E2N}
	E_{2,N}(f)=	\Vert f - S^N_f  \Vert_2 \leq \frac{C_1(p)}{N^{p+1}} \Vert \Delta^{p+1} f^{(p+1)} \Vert_{L_2(-1,1)}.
\end{equation}
\begin{Prop}\label{prop:uboundK}
	Let \(Q \in W^{p+1,2}([0,b],\mathcal{M}_2)\), \(p\geq 0\). Then, for all \(0\leq m,l \leq p+1\) such that \( m+l\leq p+1\) we have that
	\begin{equation}
		\Vert \partial^l_t\partial^m_x K(x, \cdot) \Vert_{L_2(-x,x)} \leq C(b,Q,Q',\ldots,Q^{(l+m)})
	\end{equation}
\end{Prop}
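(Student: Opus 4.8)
The plan is to use the Goursat PDE \eqref{eqn:pdek} to trade every $x$-derivative for a $t$-derivative, thereby reducing the claim to a uniform bound on the pure $t$-derivative slices, and then to establish the latter by an energy argument. Since $B$ is invertible with $B^{-1}=B^{T}=-B$, equation \eqref{eqn:pdek} solves for the $x$-derivative as $\partial_x K=-B^{T}(\partial_t K)B-B^{T}QK$. Because $Q=Q(x)$ does not depend on $t$, iterating this identity and differentiating in $t$ gives, for every $m$, a representation
\[
\partial_t^{l}\partial_x^{m} K=\sum_{i=0}^{m} \mathcal T_{m,i}\bigl[\partial_t^{\,i+l}K\bigr],
\]
where each $\mathcal T_{m,i}$ is a fixed linear combination of left and right multiplications by $B$ and by $Q,Q',\dots,Q^{(m-1)}$; I would prove this by induction on $m$. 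By the one–dimensional Sobolev embedding $W^{p+1,2}([0,b],\mathcal M_2)\hookrightarrow C^{p}([0,b],\mathcal M_2)$ the coefficients $Q,\dots,Q^{(m-1)}$ are continuous, hence bounded, whenever $m-1\le p$, so $\Vert\mathcal T_{m,i}\Vert\le C(b,Q,\dots,Q^{(m+l)})$. As $i+l\le m+l\le p+1$, this reduces the proposition to the bound $\Vert \partial_t^{\,j}K(x,\cdot)\Vert_{L_2(-x,x)}\le C(b,Q,\dots,Q^{(j)})$ for $0\le j\le p+1$, uniformly in $x\in[0,b]$.

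For $j\le p$ this is routine. By Proposition \ref{prop:smoothness} both $\partial_t^{\,j}K$ and $\partial_x\partial_t^{\,j}K$ lie in $L^{2}(\Omega^{+})$, so $\partial_t^{\,j}K$ has boundary traces on the characteristics $t=\pm x$ that are square-integrable in $x$, and the scalar function $g_j(x):=\Vert\partial_t^{\,j}K(x,\cdot)\Vert_{L_2(-x,x)}^{2}$ is absolutely continuous with $g_j'\in L^{1}(0,b)$ (the moving-boundary terms are the traces, the interior term is controlled by Cauchy--Schwarz in $L^2(\Omega^+)$). This already yields a uniform bound on $g_j$.

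The delicate case is the top order $j=p+1$, which I would treat by an energy estimate for $g:=g_{p+1}$ (using the Frobenius norm $|\cdot|_F$, equivalent to $|\cdot|$). Differentiating $g(x)=\int_{-x}^{x}|\partial_t^{p+1}K(x,t)|_F^{2}\,dt$ produces the moving-boundary contributions $|\partial_t^{p+1}K(x,\pm x)|_F^{2}$ together with the interior term $\int_{-x}^{x}2\operatorname{Re}\,\mathrm{tr}\bigl[(\partial_t^{p+1}K)^{*}\,\partial_x\partial_t^{p+1}K\bigr]\,dt$. Substituting $\partial_x K=-B^{T}(\partial_t K)B-B^{T}QK$ and expanding by Leibniz, the only genuinely top-order piece is $-2\operatorname{Re}\,\mathrm{tr}\bigl[(\partial_t^{p+1}K)^{*}B^{T}(\partial_t^{p+2}K)B\bigr]$; integrating it by parts in $t$ converts it into a pure boundary expression (the two conjugate interior pieces cancel), while every remaining interior term has order $\le p+1$ and is bounded by $Cg(x)+C'$ via Cauchy--Schwarz and the estimates already proved for $j\le p$. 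A short $2\times2$ computation then shows that the surviving boundary terms at $t=x$ and $t=-x$ equal, respectively, $\tfrac12|B\partial_t^{p+1}K(x,x)-\partial_t^{p+1}K(x,x)B|_F^{2}$ and $\tfrac12|B\partial_t^{p+1}K(x,-x)+\partial_t^{p+1}K(x,-x)B|_F^{2}$; that is, only the commutator and the anticommutator of the top-order trace with $B$ appear.

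This last point is the crux and the main obstacle, since a generic $L^2(\Omega^+)$ function has no trace on the characteristics: it is only these special commutator/anticommutator combinations, selected by the energy identity, that are meaningful. The reason the argument closes is that differentiating the Goursat conditions \eqref{eqn:goursatx} and \eqref{eqn:goursatmx} exactly $p+1$ times along the characteristics and re-expressing the resulting total derivatives through the PDE shows that precisely these combinations of $\partial_t^{p+1}K(x,\pm x)$ are given by $Q^{(p+1)}(x)$ (at $t=x$) and by $0$ (at $t=-x$), modulo diagonal derivatives of order $\le p$ that are already under control. Hence the boundary terms lie in $L^{1}(0,b)$ with bound $C(b,Q,\dots,Q^{(p+1)})$ — using here only $Q^{(p+1)}\in L^{2}$ — so that $g'(x)\le Cg(x)+h(x)$ with $h\in L^{1}(0,b)$. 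Gr\"onwall's inequality, started from a sequence $x_k\downarrow0$ along which $g(x_k)\to0$ (available since $g\in L^{1}$), then gives the desired uniform bound on $g_{p+1}$, completing the proof.
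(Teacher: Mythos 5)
Your algebra is sound: the reduction of mixed derivatives to pure $t$-derivatives via \eqref{eqn:pdek}, the identity $\vert V\vert_F^2\pm\operatorname{tr}(V^*BVB)=\tfrac12\vert BV\mp VB\vert_F^2$ that produces the commutator/anticommutator boundary terms, and the verification that the $(p+1)$-times differentiated Goursat conditions \eqref{eqn:goursatx}, \eqref{eqn:goursatmx} control exactly those combinations of $\partial_t^{p+1}K$ on the characteristics (through $Q^{(p+1)}$ at $t=x$, and through lower-order terms only at $t=-x$) all check out. This energy/Gr\"onwall route is genuinely different from the paper, which passes to characteristic coordinates and differentiates the integral equations $H_\xi=\tfrac12 G_Q+\int_0^\eta G_Q H\,dv$, $H_\eta=\int_0^\xi G_Q H\,du$ with $G_Q=BQ'+Q^2$, so that the only unbounded object is $G_Q^{(r)}$ itself and the slice bounds are read off pointwise. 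However, your proof has two genuine gaps.

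First, the energy identity is not defined at top order: for $K\in W^{p+1,2}(\Omega^+,\mathcal{M}_2)$ the interior term contains $\partial_x\partial_t^{p+1}K$, a derivative of order $p+2$ that need not exist even as a locally integrable function, and the separate traces $\partial_t^{p+1}K(x,\pm x)$ need not exist either. You acknowledge this, but the cancellation meant to rescue the computation is carried out on these undefined quantities, which is circular; a rigorous version needs a regularization (smooth $Q_n\to Q$ in $W^{p+1,2}$), the estimate for the kernels $K_n$, and a passage to the limit via a stability result for the Goursat problem, none of which you supply. Second, and decisively, the Gr\"onwall initialization is wrong: integrability of $g$ does \emph{not} yield a sequence $x_k\downarrow 0$ with $g(x_k)\to 0$. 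For example $u(x,t)=(2x)^{-1/2}$ lies in $L^2(\Omega^+)$ and has $g\equiv 1$; integrability only gives $\liminf_{x\to0^+}x\,g(x)=0$. Without a controlled starting value, your inequality $g'\le Cg+h$ yields at best $g(x)\le e^{Cx}\bigl(\Vert g\Vert_{L^1}/x+\Vert h\Vert_{L^1}\bigr)$ (integrate $g(x)e^{-Cx}\le g(x_0)+\Vert h\Vert_{L^1}$ over $x_0\in(0,x)$), which blows up as $x\to0$ and does not prove the proposition. Establishing $g(0^+)=0$ --- which is true for the actual kernel --- requires structural information about $\partial_t^{p+1}K$ near the corner, precisely what the paper's integral-equation representation provides; alternatively the regularization above fixes both gaps at once, since for smooth $Q_n$ the function $\partial_t^{p+1}K_n$ is continuous and $g_n(0^+)=0$ is immediate. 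So your idea can be completed, but not by the argument as written.
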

\begin{proof}
	Consider the change of variables \(\xi = \frac{1}{2}(x+t), \, \eta=\frac{1}{2}(x-t)\). Let \(H(\xi,\eta)=K(x,t).\) According to proposition \ref{prop:smoothness}, \( K\in W^{p+1,2}(\Omega^+,\mathcal{M}_2)\). Hence, \(H(\xi,\eta) \in W^{p+1,2}(\Sigma^+,\mathcal{M}_2)\subseteq C^{p}(\Sigma^+,\mathcal{M}_2) \) where \(\Sigma^+ := \{ (\xi, \eta) \vert 0\leq \xi \leq b, 0\leq \eta\leq b-\xi\} \).	Set \(G_Q(\theta):=BQ'(\theta)+Q^2(\theta)\). Note that \(G_Q\in W^{p,2}([0,b],\mathcal{M}_2)\). The proof of \cite[Theorem A.2]{Nelson-analytic} shows that
	\[
	H_\xi(\xi,\eta)=\frac{1}{2}G_Q(\theta)+\int_{0}^{\eta}G_Q(\xi+v)H(\xi,v)dv,
	\]
	and
	\[
	H_\eta(\xi,\eta)=\int_{0}^{\xi}G_Q(u+\eta)H(u,\eta)du.
	\]
	Therefore, if \(r\leq p\) we have that
	\begin{align*}
		\partial_\xi^r H_\xi(\xi,\eta)&=\frac{1}{2}G_Q^{(r)}(\xi)+\int_{0}^{\eta}\partial_\xi^r \left(G_Q(\xi+v)H(\xi,v) \right)dv \\
		&=\frac{1}{2}G_Q^{(r)}(\xi)+\sum_{k=0}^{r} \binom{r}{k}\int_{0}^{\eta}G_Q^{(r-k)}(\xi+v)\partial_\xi^k H(\xi,v)dv.
	\end{align*}
	Notice that for \(k\leq r\leq p\) a derivate \( \partial_\xi^k H(\xi,v)\) is uniformly bounded since \(H(\xi,\eta) \in C^{p}(\Sigma^+,\mathcal{M}_2) \). Using the Cauchy-Schwarz inequality, we see that
	\begin{align*}
		\vert \partial_\xi^r H_\xi(\xi,\eta) \vert&\leq \frac{1}{2}\vert G_Q^{(r)}(\xi) \vert+\sum_{k=0}^{r} \binom{r}{k}\int_{0}^{\eta} \vert G_Q^{(r-k)}(\xi+v)\vert \vert \partial_\xi^k H(\xi,v)\vert dv\\
		&\leq \frac{1}{2}\vert G_Q^{(r)}(\xi) \vert+\sum_{k=0}^{r} \binom{r}{k}\sqrt{b}\Vert G_Q^{(r-k)} \Vert_{L_2(0,b)}  \Vert \partial_\xi^k H(\xi,\cdot) \Vert_{C(\Sigma^+,\mathcal{M}_2)}  .
	\end{align*}
	Similarly,
	\begin{align*}
		\vert \partial_\eta^r H_\eta(\xi,\eta) \vert&\leq \sum_{k=0}^{r} \binom{r}{k}\int_{0}^{\xi} \vert G_Q^{(r-k)}(u+\eta)\vert \vert \partial_\eta^k H(u,\eta)\vert du\\
		&\leq\sum_{k=0}^{r} \binom{r}{k}\sqrt{b}\Vert G_Q^{(r-k)} \Vert_{L_2(0,b)} \Vert \partial_\eta^k H(\cdot,\eta) \Vert_{C(\Sigma^+,\mathcal{M}_2)} .
	\end{align*}
	For the mixed partial derivatives we have \(H_{\eta \xi}(\xi,\eta)=G_Q(\xi+\eta)H(\xi,\eta)\). The rest of the details are straightforward. 
\end{proof}
\begin{Prop}\label{prop:L2normK-KN}
	Let \(Q \in W^{p+1,2}([0,b])\). Then for all \(N > p+1, 0<x\leq b\)
	\begin{equation}
		\Vert K(x,\cdot)-K^N(x,\cdot) \Vert_{L_2(-x,x)} \leq \epsilon_{2,N}(x)
	\end{equation}
	where 
	\begin{equation}
		\begin{split}
			\epsilon_{2,N}(x)&:=\frac{C_1(p,\mathcal{M}_2)x^{p+1}}{N^{p+1}} 
			\Vert \partial_2^{p+1}K(x,\cdot) \Vert_{L_2(-x,x)}\\
			&\leq \frac{C_2(p,b,Q)x^{p+1}}{N^{p+1}}.
		\end{split}
	\end{equation}
\end{Prop}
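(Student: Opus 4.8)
The plan is to freeze $x$ and rescale the second variable to the reference interval $[-1,1]$, thereby reducing the two-variable estimate to the one-dimensional Fourier-Legendre bound \eqref{eqn:E2N}. For fixed $0<x\leq b$, substitute $s=t/x$ and define the slice $f_x(s):=K(x,xs)$ for $s\in[-1,1]$. Under this substitution the expansion \eqref{eqn:legendre} reads $f_x(s)=\sum_{n=0}^\infty \frac{K_n(x)}{x}P_n(s)$, so that by \eqref{eqn:Kncoeff} the matrices $K_n(x)/x$ are exactly the Fourier-Legendre coefficients of $f_x$ and $K^N(x,xs)=S^N_{f_x}(s)$. A direct change of variables $t=xs$ in the integral then gives the identity $\Vert K(x,\cdot)-K^N(x,\cdot)\Vert_{L_2(-x,x)}=\sqrt{x}\,\Vert f_x-S^N_{f_x}\Vert_{L_2(-1,1)}$.

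Next I would apply \eqref{eqn:E2N} to $f_x$. Its admissibility follows from Proposition \ref{prop:smoothness}(b), which gives $K\in W^{p+1,2}(\Omega^+,\mathcal{M}_2)$ and hence $K\in C^p$: the derivatives $f_x,f_x',\ldots,f_x^{(p)}$ are then absolutely continuous, while the chain rule yields $f_x^{(p+1)}(s)=x^{p+1}(\partial_t^{p+1}K)(x,xs)$, which lies in $L_2(-1,1)$ because Proposition \ref{prop:uboundK} (with $m=0$, $l=p+1$) ensures $\partial_t^{p+1}K(x,\cdot)\in L_2(-x,x)$ for every $x$. Since \eqref{eqn:E2N} is a scalar statement, it is applied entry-wise and recombined through the norm equivalence in Remark \ref{rmk:norms}, which only inflates the constant $C_1(p)$ to a dimensional constant $C_1(p,\mathcal{M}_2)$. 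Using $\Delta(s)=\sqrt{1-s^2}\leq 1$ together with the change of variables back to $t$ gives $\Vert \Delta^{p+1}f_x^{(p+1)}\Vert_{L_2(-1,1)}\leq \Vert f_x^{(p+1)}\Vert_{L_2(-1,1)}=x^{p+1/2}\Vert \partial_t^{p+1}K(x,\cdot)\Vert_{L_2(-x,x)}$.

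Assembling the rescaling identity, the bound \eqref{eqn:E2N}, and the last relation produces the powers $\sqrt{x}\cdot x^{p+1/2}=x^{p+1}$ together with the factor $N^{-(p+1)}$, which is precisely the first inequality defining $\epsilon_{2,N}(x)$. The second inequality then follows at once by invoking Proposition \ref{prop:uboundK} a final time to replace $\Vert \partial_t^{p+1}K(x,\cdot)\Vert_{L_2(-x,x)}$ by the constant $C(b,Q,Q',\ldots,Q^{(p+1)})$, which is absorbed into $C_2(p,b,Q)$.

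I expect the only delicate points to be the bookkeeping of the powers of $x$ generated by the two changes of variables (the forward rescaling $t=xs$ and the differentiation through the chain rule) and the verification, uniform in $x$, that the frozen slice $f_x$ is admissible for the one-dimensional theorem; both are controlled by Propositions \ref{prop:smoothness} and \ref{prop:uboundK}. The passage from the scalar estimate to $\mathcal{M}_2$-valued functions is routine via Remark \ref{rmk:norms}.
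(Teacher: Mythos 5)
Your proposal is correct and follows essentially the same route as the paper's own proof: the rescaling $F(z)=K(x,xz)$ to $[-1,1]$, the identification of $K_n(x)/x$ as Fourier--Legendre coefficients, the application of the bound \eqref{eqn:E2N} with $\Delta\leq 1$, and the final uniform bound via Proposition \ref{prop:uboundK}. The only cosmetic difference is that you fold the chain-rule factor $x^{p+1}$ and the Jacobian factor into a single $x^{p+1/2}$, whereas the paper tracks them separately; the power bookkeeping agrees in both cases.
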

\begin{proof}
	For each \( x \in (0,b] \), let \( F(z):=K(x,xz), \, z\in [-1,1]\). Then \( F \in W^{p+1,2}([-1,1],\mathcal{M}_2)\) and \( F^{(p+1)}(z)=x^{p+1} \partial_2^{p+1} K(x,xz)\), where \(\partial_2\) denotes the partial derivative with respect to the second variable. Now, note that \( F^N(z):=\sum_{n=0}^{N}\frac{1}{x}K_n(x)P_n(z)\) is a partial sum of the Fourier-Legendre series of \(F(z)\). By a change of variables, we see that
	\[
	\Vert K(x,\cdot)-K^N(x,\cdot) \Vert_{L_2(-x,x)} = \sqrt{x} \Vert F-F^N \Vert_{L_2(-1,1)}
	\]
	Hence, by \eqref{eqn:E2N}
	\begin{align*}
		\Vert K(x,\cdot)-K^N(x,\cdot) \Vert_{L_2(-x,x)} &=\sqrt{x} \Vert F-F^N \Vert_{L_2(-1,1)} \\
		&\leq \sqrt{x} \frac{C_1(p,\mathcal{M}_2)}{N^{p+1}} \Vert \Delta^{p+1}  F^{(p+1)} \Vert_{L_2(-1,1)} \\
		&= \frac{C_1(p,\mathcal{M}_2)x^{p+3/2}}{N^{p+1}}
		\Vert \Delta^{p+1}\partial_2^{p+1} K(x,x\cdot)\Vert_{L_2(-1,1)}.
	\end{align*}
	On the other hand, \( \vert \Delta^{p+1}(t) \vert \leq 1 \) for \( t\in [-1,1]\) and the change of variables \(t=xz\) gives us
	\begin{align*}
		\Vert \partial_2^{p+1} K(x,x\cdot)\Vert_{L_2(-1,1)}^2 	&=\int_{-1}^{1}\vert \partial_2^{p+1} K(x,xz) \vert ^2 dz\\
		&=\frac{1}{x}\int_{-x}^{x}\vert \partial_2^{p+1} K(x,t) \vert^2 dt \\
		&=\frac{1}{x}	\Vert  \partial_t^{p+1}K(x,\cdot) \Vert_{L_2(-x,x)}^2,
	\end{align*}
	which can be uniformly bounded by a constant that depends on \(p, b, Q\) and its derivates by proposition \ref{prop:uboundK}, which we write as \(C_2(p,b,Q)\) for short. 
\end{proof}
As a corollary, we obtain the decay rate of the coefficients \( K_n(x)\).
\begin{Cor}
	Let \(Q \in W^{p+1,2}([0,b])\). There exist a constant \(C_3(p,b,Q)\) such that
\begin{equation}
	\vert K_n(x) \vert \leq \frac{C_3(p,b,Q) x^{p+3/2} }{n^{p+1/2}}, \quad n>p+2.
\end{equation}
\end{Cor}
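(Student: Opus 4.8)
The plan is to isolate a single coefficient $K_n(x)$ as the difference of two consecutive partial sums of the Fourier--Legendre expansion \eqref{eqn:legendre} and then invoke the $L^2$ estimate of Proposition \ref{prop:L2normK-KN}. Concretely, from the definition of the partial sum $K^N(x,t)=\sum_{n=0}^N \frac{1}{x}K_n(x)P_n(t/x)$ one has the exact identity
\begin{equation*}
	K^n(x,t)-K^{n-1}(x,t)=\frac{1}{x}K_n(x)P_n\!\left(\frac{t}{x}\right),
\end{equation*}
so that, adding and subtracting $K(x,t)$ and applying the triangle inequality in $L_2(-x,x)$,
\begin{equation*}
	\left\Vert \tfrac{1}{x}K_n(x)P_n(\cdot/x)\right\Vert_{L_2(-x,x)}\leq \Vert K-K^n(x,\cdot)\Vert_{L_2(-x,x)}+\Vert K-K^{n-1}(x,\cdot)\Vert_{L_2(-x,x)}\leq \epsilon_{2,n}(x)+\epsilon_{2,n-1}(x).
\end{equation*}
The hypothesis $n>p+2$ guarantees that both $n$ and $n-1$ exceed $p+1$, so Proposition \ref{prop:L2normK-KN} applies to each term.

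Next I would evaluate the left-hand side exactly. Since $P_n(t/x)$ is scalar, the operator norm $\vert K_n(x)\vert$ factors out of the integral, and with the change of variables $t=xz$ together with the normalization $\int_{-1}^{1}P_n(z)^2\,dz=\frac{2}{2n+1}$ one gets
\begin{equation*}
	\left\Vert \tfrac{1}{x}K_n(x)P_n(\cdot/x)\right\Vert_{L_2(-x,x)}^2=\frac{\vert K_n(x)\vert^2}{x^2}\int_{-x}^{x}P_n\!\left(\frac{t}{x}\right)^2 dt=\frac{\vert K_n(x)\vert^2}{x}\cdot\frac{2}{2n+1}.
\end{equation*}
Combining this with the previous display and solving for $\vert K_n(x)\vert$ yields
\begin{equation*}
	\vert K_n(x)\vert\leq \sqrt{x}\,\sqrt{\frac{2n+1}{2}}\,\bigl(\epsilon_{2,n}(x)+\epsilon_{2,n-1}(x)\bigr)\leq \sqrt{x}\,\sqrt{\frac{2n+1}{2}}\cdot\frac{2\,C_2(p,b,Q)\,x^{p+1}}{(n-1)^{p+1}},
\end{equation*}
where in the last step I use the bound $\epsilon_{2,m}(x)\leq C_2(p,b,Q)x^{p+1}/m^{p+1}$ from Proposition \ref{prop:L2normK-KN} and $n-1<n$.

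Finally I would carry out the elementary asymptotic bookkeeping. The factor $\sqrt{x}\cdot x^{p+1}=x^{p+3/2}$ produces the claimed power of $x$, while the $n$-dependence $\sqrt{(2n+1)/2}\,(n-1)^{-(p+1)}$ is $O(n^{1/2-(p+1)})=O(n^{-(p+1/2)})$; here the restriction $n>p+2$ makes $(n-1)^{-(p+1)}\leq 2^{p+1}n^{-(p+1)}$, so the comparison of $n-1$ with $n$ is clean. Absorbing all numerical factors and $C_2(p,b,Q)$ into a single constant $C_3(p,b,Q)$ gives the asserted estimate. I do not expect a genuine obstacle here: the only points requiring care are keeping track of the operator norm (which passes through the scalar $P_n$ unobstructed, see Remark \ref{rmk:norms}) and the two elementary inequalities relating $n-1$ to $n$ that are licensed by $n>p+2$; the substantive analytic work has already been done in Proposition \ref{prop:L2normK-KN}.
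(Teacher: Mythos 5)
Your proposal is correct and takes essentially the same route as the paper: both arguments reduce the coefficient bound to Proposition \ref{prop:L2normK-KN} combined with the exact evaluation $\Vert P_n(\cdot/x)\Vert_{L_2(-x,x)}=\sqrt{2x/(2n+1)}$, followed by the same bookkeeping in $x$ and $n$. The only (immaterial) difference is mechanical: the paper isolates $K_n(x)$ by orthogonality, writing $K_n(x)=\frac{2n+1}{2}\int_{-x}^{x}\left(K(x,t)-K^{n-1}(x,t)\right)P_n\left(\frac{t}{x}\right)dt$ and applying Cauchy--Schwarz once, whereas you realize $\frac{1}{x}K_n(x)P_n(t/x)$ as the difference $K^n-K^{n-1}$ and use the triangle inequality, which invokes the proposition twice and costs a harmless factor of $2$ in the constant.
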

\begin{proof}
	From eq. \eqref{eqn:Kncoeff} and the orthogonality of the Legendre polynomials,
	\begin{align*}
		K_n(x) &= \frac{2n+1}{2} \int_{-x}^{x} K(x,t)P_n(\frac{t}{x}) dt\\
		&= \frac{2n+1}{2} \int_{-x}^{x} \left(K(x,t) - K^{n-1}(x,t) \right)P_n\left(\frac{t}{x}\right) dt.
	\end{align*}
	The Cauchy-Schwarz inequality together with proposition \ref{prop:L2normK-KN} imply that 
	\begin{align*}
		\vert K_n(x) \vert &\leq \Vert K(x,\cdot)-K^{n-1}(x,\cdot)\Vert_{L_2(-x,x)} \Vert P_n \left(\frac{\cdot}{x}\right) \Vert_{L_2(-x,x)} \\
		&\leq \sqrt{\frac{2n+1}{2}}\frac{C_2(p,b,Q)x^{p+3/2}}{(n-1)^{p+1}}\\
		& \leq \frac{C_3(p,b,Q)x^{p+3/2}  }{n^{p+1/2}}.
	\end{align*}
\end{proof}
Now, we present an analogue result to theorem \ref{thm:UnCp}, for a potential \(Q\) in the Sobolev space \( W^{p+1,2}([0,b],\mathcal{M}_2)\). 
\begin{Thm}
	Let \( Q \in W^{p+1,2}([0,b],\mathcal{M}_2)\).
	\begin{enumerate}[(a)]
		\item For \( \lambda \in \mathbb{R} \),
		\begin{equation}
			\left\vert U(\lambda,x)-U^N(\lambda, x) \right\vert \leq \sqrt{2 x} \epsilon_{2,N}(x).
		\end{equation}
		\item If \( \lambda \in \mathbb{C} \) with \( \operatorname{Im} \lambda \neq 0 \),
	\end{enumerate}
	\begin{equation}
		\left\vert U(\lambda,x)-U^N(\lambda, x) \right\vert \leq  \epsilon_{2,N}(x)\sqrt{\frac{e^{2\left\vert \operatorname{Im} \lambda \right\vert x}-1}{\left\vert \operatorname{Im} \lambda \right\vert}}.
	\end{equation}
\end{Thm}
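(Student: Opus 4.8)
The plan is to mirror the proof of Theorem \ref{thm:UnCp}, replacing the pointwise (sup-norm) control of the kernel tail by an $L^2$ estimate via the Cauchy--Schwarz inequality. The starting point is the same integral representation of the error. Since the partial sum $U^N$ arises from truncating the Fourier--Legendre expansion \eqref{eqn:legendre} of the kernel at order $N$, the transmutation formula \eqref{eqn:transp} gives, after interchanging the finite sum with the integral,
\[
U(\lambda,x)-U^N(\lambda,x)=\int_{-x}^{x}\bigl(K(x,t)-K^N(x,t)\bigr)U_0(\lambda,t)\,dt.
\]
Passing to the operator norm and using $\left\vert\int_{-x}^{x}A(t)\,dt\right\vert\leq\int_{-x}^{x}\left\vert A(t)\right\vert dt$ together with submultiplicativity of $\left\vert\cdot\right\vert$ yields
\[
\left\vert U(\lambda,x)-U^N(\lambda,x)\right\vert\leq\int_{-x}^{x}\left\vert K(x,t)-K^N(x,t)\right\vert\left\vert U_0(\lambda,t)\right\vert dt.
\]

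Next I would apply the Cauchy--Schwarz inequality to this scalar integral, factoring it as the product of the $L^2$-norms of the two scalar factors on $[-x,x]$:
\[
\left\vert U(\lambda,x)-U^N(\lambda,x)\right\vert\leq\Vert K(x,\cdot)-K^N(x,\cdot)\Vert_{L_2(-x,x)}\,\Vert U_0(\lambda,\cdot)\Vert_{L_2(-x,x)}.
\]
The first factor is bounded directly by $\epsilon_{2,N}(x)$ by Proposition \ref{prop:L2normK-KN}; this is precisely where the Sobolev hypothesis $Q\in W^{p+1,2}$ enters, in place of the $C^{p+1}$ hypothesis used in Theorem \ref{thm:UnCp}. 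It remains only to evaluate the second factor, which is elementary once Proposition \ref{prop:U0norm} supplies $\left\vert U_0(\lambda,t)\right\vert=e^{\left\vert\operatorname{Im}\lambda\right\vert\left\vert t\right\vert}$.

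Finally I would split into the two cases. For real $\lambda$ we have $\left\vert U_0(\lambda,t)\right\vert\equiv 1$, so $\Vert U_0(\lambda,\cdot)\Vert_{L_2(-x,x)}=\sqrt{2x}$, which gives part (a). For $\operatorname{Im}\lambda\neq 0$ the integral
\[
\Vert U_0(\lambda,\cdot)\Vert_{L_2(-x,x)}^2=\int_{-x}^{x}e^{2\left\vert\operatorname{Im}\lambda\right\vert\left\vert t\right\vert}\,dt=\frac{e^{2\left\vert\operatorname{Im}\lambda\right\vert x}-1}{\left\vert\operatorname{Im}\lambda\right\vert}
\]
produces, upon taking the square root, exactly the factor appearing in part (b). There is no genuine obstacle in this argument; the only points requiring a little care are the bookkeeping that identifies $U^N$ with the kernel truncation $K^N$, so that the error is exactly the integral of the kernel tail against $U_0$, and the elementary evaluation of the $L^2$-norm of $U_0$ in the complex case. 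The resulting estimate is in fact sharper than the $C^{p+1}$ version of Theorem \ref{thm:UnCp}, since Cauchy--Schwarz extracts the $N^{-(p+1)}$ decay of $\epsilon_{2,N}$ rather than the $N^{-(p+1/2)}$ decay of $\epsilon_N$.
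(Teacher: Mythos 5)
Your proof is correct and follows essentially the same route as the paper: both express the error as the integral of the kernel tail $K-K^N$ against $U_0$, apply the Cauchy--Schwarz inequality to split off $\Vert K(x,\cdot)-K^N(x,\cdot)\Vert_{L_2(-x,x)}\leq\epsilon_{2,N}(x)$ via Proposition \ref{prop:L2normK-KN}, and then evaluate $\Vert U_0(\lambda,\cdot)\Vert_{L_2(-x,x)}$ separately in the real and complex cases using Proposition \ref{prop:U0norm}. No gaps; your closing remark on the improved $N^{-(p+1)}$ rate also matches the paper's motivation for this theorem.
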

\begin{proof}
	If \( \operatorname{Im} \lambda \neq 0 \), using the Cauchy-Schwarz inequality together with proposition \ref{prop:L2normK-KN}  we see that
	\begin{align*}
		\left\vert U(\lambda, x)-U^N(\lambda, x) \right\vert &\leq \int_{-x}^{x} \left\vert K(x,t)-K^N(x,t) \right\vert \left\vert U_0(\lambda,t) \right\vert dt \\
		&\leq \epsilon_{2,N}(x) \left( \int_{-x}^{x} \left\vert U_0(\lambda, t) \right\vert^2 dt \right)^{1/2} \\
		&= \sqrt{2} \epsilon_{2,N}(x) \left( \int_{0}^{x} e^{2\left\vert \operatorname{Im} \lambda \right\vert t} dt \right)^{1/2}=  \epsilon_{2,N}(x)\sqrt{\frac{e^{2\left\vert \operatorname{Im} \lambda \right\vert x}-1}{\left\vert \operatorname{Im} \lambda \right\vert}}.
	\end{align*}
	This shows that (b) holds true. If \( \lambda \in \mathbb{R} \),  the proof of part (a) is analogous to the proof of part (b) using that \( \left\vert U_0(\lambda,t) \right\vert =1 \).
\end{proof}

Finally, we can prove that the series \eqref{eqn:Unsbf} can be differentiated term by term. 

\begin{Prop}
	Suppose that \( Q \in W^{1,2}([0,b],\mathcal{M}_2)\). Then the series \eqref{eqn:Unsbf} can be termwise differentiated. 
\end{Prop}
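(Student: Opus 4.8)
The plan is to use the classical term-by-term differentiation theorem: if a series of (absolutely continuous) functions converges at a point and the series of their derivatives converges uniformly on $[0,b]$, then the sum is differentiable and its derivative is the differentiated series. Since \eqref{eqn:Unsbf} already converges for every $x$, I would fix $\lambda\in\mathbb C$ and show that the formally differentiated series converges uniformly in $x\in[0,b]$. Writing the generic term of \eqref{eqn:Unsbf} as $\tilde K_n(x)j_n(\lambda x)$, its derivative is $\tilde K_n'(x)j_n(\lambda x)+\lambda\tilde K_n(x)j_n'(\lambda x)$, and after reorganizing via the recurrences for $j_n'$ this is exactly the series \eqref{eqn:dUnsbf}; a Weierstrass $M$-test on these terms is what I would aim for, the absolute convergence simultaneously justifying the rearrangement into \eqref{eqn:dUnsbf}.

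The estimate needs polynomial-in-$n$ control of the coefficients and factorial decay of the Bessel factors. For the coefficients the corollary above gives $|K_n(x)|\le C_3 x^{3/2}n^{-1/2}$ (case $p=0$), whence $\tfrac{n}{x}|K_n(x)|\le C\sqrt n$ on $[0,b]$ and $|\lambda K_n(x)|\le C_\lambda n^{-1/2}$. The missing piece is a bound on $K_n'(x)$, which I would obtain by differentiating the identity $\tfrac{2}{2n+1}K_n(x)=\int_{-x}^{x}K(x,t)P_n(t/x)\,dt$. After the substitution $t=xu$ this yields
\[
K_n'(x)=\frac1x K_n(x)+\frac{(2n+1)x}{2}\int_{-1}^{1}\bigl(K_x(x,xu)+uK_t(x,xu)\bigr)P_n(u)\,du .
\]
Applying Cauchy--Schwarz to the integral, using $\Vert P_n\Vert_{L_2(-1,1)}=\sqrt{2/(2n+1)}$ and the uniform $L^2$-bounds on $K_x(x,\cdot)$ and $K_t(x,\cdot)$ from Proposition \ref{prop:uboundK} (which hold precisely because $Q\in W^{1,2}$ forces $K\in W^{1,2}(\Omega^+)$), one finds $|K_n'(x)|\le C\sqrt n$ uniformly on $[0,b]$. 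Thus every coefficient group in \eqref{eqn:dUnsbf} is $O(\sqrt n)$ uniformly in $x$ for fixed $\lambda$.

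For the Bessel factors, the power series of $j_m$ gives $|j_m(z)|\le R^m e^{R^2/2}/(2m+1)!!$ for $|z|\le R:=|\lambda|b$, and the recurrence $j_m'(z)=j_{m-1}(z)-\tfrac{m+1}{z}j_m(z)$ yields an analogous super-exponentially decaying bound for $j_m'$; since $|\lambda x|\le R$ for all $x\in[0,b]$ these bounds are uniform in $x$. Multiplying the $O(\sqrt n)$ coefficient bounds by these factorially small factors, the convergent series $\sum_n \sqrt n\,R^{2n}/(4n+1)!!$ dominates the differentiated series, so the Weierstrass $M$-test gives uniform convergence on $[0,b]$ and the theorem quoted above finishes the proof. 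I expect the derivative estimate for $K_n'(x)$ to be the main obstacle: it is exactly there that the Sobolev hypothesis on $Q$ is used, through the uniform $L^2$-control of the first-order partials of the transmutation kernel, while the decay of the Bessel functions makes the final summation routine.
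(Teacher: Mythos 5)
Your proof is correct, and it follows the same overall skeleton as the paper's: fix $\lambda$, show that the formally differentiated series \eqref{eqn:dUnsbf} converges uniformly in $x\in[0,b]$, and invoke the classical termwise-differentiation theorem in its absolutely continuous version. The difference lies in the key estimate. The paper never uses decay of individual coefficients: it applies Cauchy--Schwarz to split $\sum_n\vert \tilde K_n(x)\,j_n(\lambda x)\vert$ into $\bigl(\sum_n \tfrac{2}{2n+1}\vert \tilde K_n(x)\vert^2\bigr)^{1/2}\bigl(\sum_n \tfrac{2n+1}{2}\vert j_n(\lambda x)\vert^2\bigr)^{1/2}$, bounds the first factor all at once by Bessel's inequality (the $K_n(x)$, resp.\ $K_n'(x)$, being the Fourier--Legendre coefficients of $xK(x,xz)$, resp.\ of $K(x,xz)+xK_1(x,xz)+xzK_2(x,xz)$, whose $L^2$ norms Proposition \ref{prop:uboundK} bounds uniformly in $x$), and bounds the second factor by the factorial decay of the spherical Bessel functions. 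You instead derive pointwise-in-$n$ bounds: $\vert K_n(x)\vert\le C x^{3/2}n^{-1/2}$ from the corollary to Proposition \ref{prop:L2normK-KN} (case $p=0$), and $\vert K_n'(x)\vert\le C\sqrt n$ by Cauchy--Schwarz applied to the differentiated coefficient identity, then finish with a Weierstrass $M$-test against $\sum_n\sqrt n\,R^{2n}/(4n+1)!!$. Note that your identity for $K_n'$ is exactly the one the paper uses, both arguments rest on the same Sobolev input (Proposition \ref{prop:uboundK}, i.e.\ $Q\in W^{1,2}$ forcing uniform $L^2$ control of $K_x$ and $K_t$), and your differentiation under the integral sign is the same step the paper performs without further justification, so it is not a gap relative to the paper's own standard. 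As for what each route buys: the paper's Bessel-inequality argument is shorter and needs no coefficient-decay corollary at all; yours is more elementary and yields extra quantitative information, namely the uniform bound $K_n'(x)=O(\sqrt n)$, at the negligible cost of treating separately the finitely many indices $n\le p+2$ excluded by the corollary (harmless, since the $M$-test only concerns the tail).
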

\begin{proof}
	Proceeding as before, we rewrite the series \eqref{eqn:Unsbf} as in \eqref{eqn:Unsbftilde}
	\( \sum_{n=0}^\infty \tilde{K}_n(x)j_{n}(\lambda x). \)
	For any \(\lambda\), using the Cauchy-Schwarz inequality, we can see that the series \eqref{eqn:Unsbftilde}
	is absolutely bounded,
	\begin{align*}
		\sum_{n=0}^\infty \vert \tilde{K}_n(x)\vert \vert j_{n}(\lambda x) \vert &\leq\left(\sum_{n=0}^\infty  \frac{2\vert \tilde{K}_n(x) \vert^2}{2n+1} \right)^{1/2} \left(\sum_{n=0}^\infty \frac{(2n+1)\vert j_n(\lambda x)\vert^2 }{2}  \right)^{1/2}\\
		&=\left(2\sum_{n=0}^\infty  \frac{2\vert {K}_n(x) \vert^2}{2n+1} \right)^{1/2} \left(\sum_{n=0}^\infty \frac{(2n+1)\vert j_n(\lambda x)\vert^2 }{2}  \right)^{1/2}.
	\end{align*} 
	Since \( K_n(x) \) are the Fourier-Legendre coefficients of the function \( x K(x,xz) \in L^2([-1,1],\mathcal{M}_2)\) (with respect to \(z\)), using Bessel's inequality and the fact that \( \Vert K(x,\cdot) \Vert_{L_2(-x,x)}\) is uniformly bounded (see proposition \ref{prop:uboundK}) for \( 0<x\leq b\)  we have that
	\begin{align*}
		\sum_{n=0}^\infty \vert \tilde{K}_n(x)\vert \vert j_{n}(\lambda x) \vert &\leq 2 \Vert x K(x,x\cdot) \Vert_{L_2(-1,1)} \left(\sum_{n=0}^\infty \frac{(2n+1)\vert j_n(\lambda x)\vert^2 }{2}  \right)^{1/2}\\
		&\leq C(b,Q ) \left(\sum_{n=0}^\infty \frac{(2n+1)\vert j_n(\lambda x)\vert^2 }{2}  \right)^{1/2}.
	\end{align*} 
	We recall the following bound for the spherical Bessel functions \cite[f. 9.1.62]{Abra}
	\[
	\vert j_n(z) \vert \leq \sqrt{\pi} \left\vert \frac{z}{2}\right\vert^n \frac{e^{\vert \operatorname{Im}z \vert}}{\Gamma(n+3/2)}.
	\]
	Hence,
	\begin{align*}
		\sum_{n=0}^\infty \vert \tilde{K}_n(x)\vert \vert j_{n}(\lambda x) \vert &\leq C(b,Q) \left(\sum_{n=0}^\infty \frac{(2n+1)}{2}\pi \left \vert \frac{\lambda x}{2} \right \vert^{2n} \frac{ e^{2x \vert \operatorname{Im}\lambda \vert} }{\Gamma^2(n+3/2)}   \right)^{1/2}\\
		&\leq C(b,Q ) \left(\pi\sum_{n=0}^\infty  \left \vert \frac{\lambda b}{2} \right \vert^{2n} \frac{ e^{2b \vert \operatorname{Im}\lambda \vert} }{\Gamma(n+3/2)\Gamma(n+1/2)}   \right)^{1/2}.
	\end{align*}
	Since the series in the right-hand side is convergent, it follows that \eqref{eqn:Unsbf} is uniformly convergent. The coefficients \( K_n\in W^{1,2}(0,b)  \) which can be seen for instance, using the mapping property (see theorem \ref{thm:mapping} ). Moreover, for the derivatives we have
	\[
	K'_n(x)=\frac{2n+1}{2}\int_{-1}^1(K(x,xz)+xK_1(x,xz)+xzK_2(x,xz))P_n(z)dz.
	\]
	This shows that \( K'_n(x)\) are the Fourier Legendre coefficients of the function \(K(x,xz)+xK_1(x,xz)+xzK_2(x,xz) \in L^2[-1,1] \). Here \(K_1\) and \(K_2\) stand for the partial derivatives with respect to the first and second variable. By proposition \ref{prop:uboundK}
	\[
	\Vert K_1(x,\cdot) \Vert_{L_2(-x,x)}, \Vert K_2(x,\cdot) \Vert_{L_2(-x,x)} \leq  C(b,Q,Q' ).
	\]
	Then, an analogous argument shows that the series of derivatives given by eq. \eqref{eqn:dUnsbf} is uniformly convergent. Hence, it can be termwise differentiated. 
\end{proof}

\section{Numerical implementation and examples}\label{sec:numerical}
In this section we describe an algorithmic procedure to solve either an initial value problem or a boundary value problem for the one-dimensional Dirac equation \eqref{eqn:dirac}, based on the results of the previous sections. Our method relies on a truncation of the NSBF series \eqref{eqn:Unsbf}. A natural question arises immediately. Is there a practical way to control the accuracy of a partial sum approximation? We answer this question in the following observation.
\begin{Rmk}\label{rmk:accuracy}
	Substitution of the Fourier-Legendre series of the transmutation kernel \(K(x,t)\) in the Goursat conditions \eqref{eqn:goursatx} and \eqref{eqn:goursatmx} leads to the following equalities 
	\begin{equation}
		\frac{1}{x} \sum_{n=0}^{\infty} \left[BK_n(x)-K_n(x)B  \right]  = -Q(x), \quad \frac{1}{x} \sum_{n=0}^{\infty}(-1)^n \left[BK_n(x)-K_n(x)B  \right] = 0,
	\end{equation}
	where we have used the fact that the \(n\)-th Legendre polynomial satisifes \( P_n(1)=1, \, P_n(-1)=(-1)^n.\) Hence, the following differences
	\begin{equation}
		\begin{split}
					\delta_{Q,N}(x)&:= \bigg \vert Q(x) + \frac{1}{x} \sum_{n=0}^{N} \left[BK_n(x)-K_n(x)B  \right]  \bigg\vert, \\ 
					\delta_{0,N}&:= \bigg \vert\frac{1}{x} \sum_{n=0}^{N} (-1)^n \left[BK_n(x)-K_n(x)B  \right]  \bigg\vert,
		\end{split}
	\end{equation}
	indicate the accuracy of the approximation of the transmutation kernel. We can use this fact to control the accuracy of the approximate solution \eqref{eqn:Uapprox} and estimate an optimal number \(N\) to choose.
\end{Rmk}
Let us consider the one-dimensional Dirac equation \eqref{eqn:dirac} with an initial condition
\begin{equation}\label{eqn:ic}
	Y(0)=\begin{pmatrix}{y_{1}(0)}\\{y_{2}(0)}\end{pmatrix}=\begin{pmatrix}{c_1}\\{c_2}\end{pmatrix},
\end{equation}
or a boundary condition
\begin{equation}\label{eqn:bc}
	\begin{pmatrix}{a_{11}}&{a_{12}}\\{a_{21}}&{a_{22}}\end{pmatrix}\begin{pmatrix}{y_{1}(0)}\\{y_{2}(0)}\end{pmatrix}+\begin{pmatrix}{a_{13}}&{a_{14}}\\{a_{23}}&{a_{24}}\end{pmatrix}\begin{pmatrix}{y_{1}(b)}\\{y_{2}(b)}\end{pmatrix}=\begin{pmatrix}{0}\\{0}\end{pmatrix}.
\end{equation}
In principle, the coefficients \(a_{ij}\) in the boundary condition can be not only constants but possibly, entire functions of the spectral parameter \(\lambda\). The algorithm to solve either of these problems is as follows.
\begin{enumerate}
	\item Find the matrix fundamental solution $U(0,x)$ of the equation $B\frac{dY}{dx}+Q(x)Y=0$. Such solution can be found using any numerical method to solve initial value problems for ODEs. In particular, it can be constructed using the SPPS method, see \cite[Theorem 2.1 \& Section 2.3]{Nelson-spps}. 
	\item Compute the matrix functions $K_{n}$, $n=0,\ldots,N$, using eq. \eqref{eqn:thetan}, see also theorem \ref{thm:thetan} and remark \ref{rmk:Uinv}. To choose an optimal number \(N\) to achieve a desired accuracy, see remark \ref{rmk:accuracy}.
	\item Compute the approximate solution \( U^N(\lambda, x) \), see eq. \eqref{eqn:Uapprox}.
	\item An approximation to the solution of the initial value problem \eqref{eqn:ic} is given by $Y^{N}(\lambda, x)=U^N(\lambda, x)\begin{pmatrix}c_1 \\ c_2 \end{pmatrix}$.
	\item The characteristic function of the spectral problem defined by the boundary condition \eqref{eqn:bc} is given by
		\begin{equation}\label{eqn:charfun}
	\Delta(\lambda):=\operatorname{det}\left(M(\lambda)\right),
	\end{equation}
	where
	\begin{equation}\label{eqn:Mchareq}
		M(\lambda)=
		\begin{pmatrix}
			{a_{11}}&{a_{12}}\\
			{a_{21}}&{a_{22}}
		\end{pmatrix}
		+
		\begin{pmatrix}
			{a_{13}}&{a_{14}}\\
			{a_{23}}&{a_{24}}
		\end{pmatrix}
		U(\lambda,b).
	\end{equation}
	The eigenvalues coincide with the zeros of the characteristic function \eqref{eqn:charfun}. We find the zeros of an approximation of the characteristic function \(	\Delta_N(\lambda) =\operatorname{det}\left(M_N(\lambda)\right)\), with \(M_N(\lambda)\) given by
\begin{equation}
		M_N(\lambda)=
		\begin{pmatrix}
			{a_{11}}&{a_{12}}\\
			{a_{21}}&{a_{22}}
		\end{pmatrix}
		+
		\begin{pmatrix}
			{a_{13}}&{a_{14}}\\
			{a_{23}}&{a_{24}}
		\end{pmatrix}
		U^N(\lambda,b).
\end{equation}
	
\end{enumerate}
All the computations were performed numerically using Matlab 2024b in machine precision. Further implementation details can be found in \cite{Nelson-spps, nsbf} and the references therein. The zeros of the characteristic function were found using the Matlab function \texttt{fzero}.

\begin{Rmk}
	For numerical methods that benefit from or depend on the knowledge of the \(\lambda\)-derivative of the matrix solution, we can compute it directly as
	\begin{equation}
			\begin{split}
				\frac{d}{d \lambda}U(\lambda, x)  
				& = U_{0,\lambda}(\lambda, x) + 2 \sum_{n=0}^{\infty}(-1)^n K_{2n}(x)\left[\frac{2n}{\lambda}j_{2n}(\lambda x)-xj_{2n+1}(\lambda x) \right]\\
				&\,\,-2\sum_{n=0}^{\infty}(-1)^n K_{2n+1}(x)B\left[  xj_{2n}(\lambda x) - \frac{2n+2}{\lambda}j_{2n+1}(\lambda x)\right],
			\end{split}
	\end{equation}
	where 
	\begin{equation}
		U_{0,\lambda}(\lambda, x) = \begin{pmatrix}
			-x \sin \lambda x & -x \cos \lambda x \\ x \cos \lambda x & -x \sin \lambda x
		\end{pmatrix}.
	\end{equation}
\end{Rmk}

\begin{Ex}[Example 3.4 \cite{AnnabyTharwat2}]\label{ex:AT}
	
	Consider the following spectral problem
	\begin{equation}\label{eqn:example}
		\begin{pmatrix}
			{0}&{1}\\
			{-1}&{0}
		\end{pmatrix}
		\frac{dZ}{dx}
		+\begin{pmatrix}
			{-x}&{0}\\
			{0}&{1}
		\end{pmatrix}
		Z=\lambda Z,
		\quad
		Z=\begin{pmatrix}
			{u(x)}\\
			{v(x)}
		\end{pmatrix},
		\quad
		0\le x\le 1,
	\end{equation}
	with boundary conditions
	\begin{equation}\label{eqn:bcexample}
		u(0)=u(1)=0.
	\end{equation}
	We can transform \eqref{eqn:example} into the form \eqref{eqn:dirac} using the orthogonal transformation
	\begin{equation*}
		Z(x)=\begin{pmatrix}
			{\cos(\varphi(x))}&{-\sin(\varphi(x))}\\
			{\sin(\varphi(x))}&{\cos(\varphi(x))}
		\end{pmatrix}
		Y(x),\quad \varphi(x)=\frac{x(x-2)}{4},
	\end{equation*}
	see \cite{levitan}. For more details, see \cite{Nelson-analytic}. After transforming the system and the boundary conditions appropriately, we compute 240 eigenvalues, numbered from -105 to 134, using \( N=16. \) In figure \ref{fig:ejemplo} we show the absolute error of the computed eigenvalues. Note that for large eigenvalues, the calculated ones coincide with the exact eigenvalues up to machine precision. 
	\begin{figure}
		\centering
		\includegraphics[width=5in,height=2.4in]{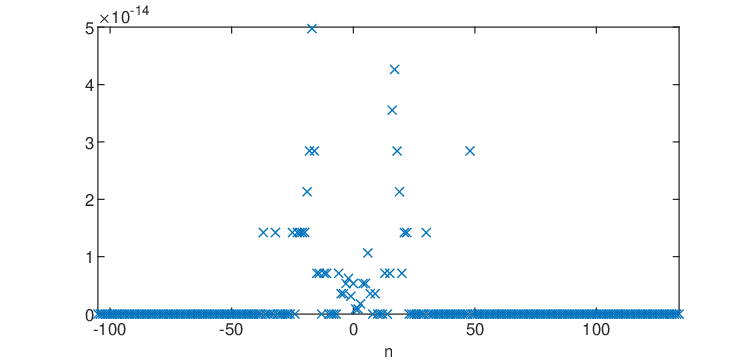}
		\label{fig:ejemplo}
		\caption{Absolute error of the eigenvalues \( \lambda_n \) of example \ref{ex:AT}, \( N=16\).}
	\end{figure}
\end{Ex}

\appendix

\section{The mapping property and its connection to the NSBF representation}\label{appendix:mapping}

Let \( (f,g)^T \) be a solution of the homogeneous Dirac equation \( B \frac{dY}{dx}+Q(x)Y=0 \), such that both \(f,g \) do not vanish in the interval \( [0,b]  \). Assume that the solution \( (f,g)^T \) is normalized by the condition \( f(0)g(0)=1 \).
Consider the following systems of functions defined by the recursive relations \cite{Nelson-spps},
\begin{align}
	X^{(0)}(x)&=-\int_{0}^{x}\frac{p(s)}{f^(s)}ds, \quad Y^{(0)}(x)=1+\int_{0}^{x}\frac{p(s)}{g^2(s)}ds, \label{eqn:XY0} \\
	Z^{(n)}(x)&=\int_{0}^{x}\left(f^2(s)X^{(n)}(s)+g^2(s)Y^{(n)}(s) \right)ds, \label{eqn:Zn} \\
	X^{(n+1)}(x)&=-(n+1)\int_{0}^{x}\left(\frac{g(s)}{f(s)}Y^{(n)}(s)+\frac{p(s)}{f^2(s)}Z^{(n)}(s) \right)ds, \label{eqn:Xn}  \\
	Y^{(n+1)}(x)&=(n+1)\int_{0}^{x}\left(\frac{f(s)}{g(s)}X^{(n)}(s)+\frac{p(s)}{g^2(s)}Z^{(n)}(s) \right)ds, \quad n=0,1,2,\ldots  \label{eqn:Yn}
\end{align}
Similarly, let us set
\begin{equation}
	\tilde{X}^{(0)}(x)=1+\int_{0}^{x}\frac{p(s)}{f^(s)}ds, \quad \tilde{Y}^{(0)}(x)=-\int_{0}^{x}\frac{p(s)}{g^2(s)}ds, \label{eqn:XYtilde0}
\end{equation}
and define \(\tilde{X}^{(n+1)}, \tilde{Y}^{(n+1)}\) and \(\tilde{Z}^{(n)}\) by replacing \(X^{(n)}, 
Y^{(n)}\) and \(Z^{(n)} \)  for \(\tilde{X}^{(n)}, \tilde{Y}^{(n)}\) and \(\tilde{Z}^{(n)}\) respectively in eqs. \eqref{eqn:Zn}, \eqref{eqn:Xn} and \eqref{eqn:Yn} .
Now, consider the infinite sequence of vector-valued functions \( \{ \Phi_k \}_{k=0}^\infty \) and \( \{ \Psi_k \}_{k=0}^\infty \) given by \cite{Nelson-analytic}
\begin{equation}\label{eqn:phik}
	\Phi_k = \begin{cases}
		(-1)^{(k-1)/2} f(0) \begin{pmatrix} fX^{(k)} \\ gY^{(k)} \end{pmatrix} & k \text{ odd}, \\
		(-1)^{k/2} g(0) \begin{pmatrix} f\tilde{X}^{(k)} \\ g\tilde{Y}^{(k)} \end{pmatrix} & k \text{ even}, \\
	\end{cases}
\end{equation}
and
\begin{equation}\label{eqn:psik}
	\Psi_k = \begin{cases}
		(-1)^{(k-1)/2}g(0) \begin{pmatrix} f\tilde{X}^{(k)} \\ g\tilde{Y}^{(k)} \end{pmatrix} & k \text{ odd}, \\
		(-1)^{k/2} f(0)\begin{pmatrix} f{X}^{(k)} \\ g{Y}^{(k)} \end{pmatrix} & k \text{ even}. \\
	\end{cases}
\end{equation}
Then the following result known as the mapping property holds.
\begin{Thm}\cite[Thm 3.3]{Nelson-analytic}\label{thm:mapping}
	Let \( p,q \in L^1(0,b) \) be complex valued functions. Let \( T \) be the transmutation operator for \(A_0  \) and \( A_Q \), and let \( \Phi_k \) and \( \Psi_k \) be vector-valued functions defined by \eqref{eqn:phik} and \eqref{eqn:psik} respectively. Then
	\begin{equation}
		T \begin{pmatrix} x^k \\ 0 \end{pmatrix} = \Phi_k, \quad
		T \begin{pmatrix} 0 \\ x^k \end{pmatrix} = \Psi_k, \quad k=0,1,2,\ldots
	\end{equation}
\end{Thm}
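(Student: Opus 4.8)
The plan is to characterize each of $T\begin{pmatrix}x^k\\0\end{pmatrix}$ and $T\begin{pmatrix}0\\x^k\end{pmatrix}$ as the unique solution of an initial value problem and then verify that $\Phi_k$ and $\Psi_k$ solve the same problems. Write $u_k:=T\begin{pmatrix}x^k\\0\end{pmatrix}$ and $w_k:=T\begin{pmatrix}0\\x^k\end{pmatrix}$. Since $T$ has the Volterra form \eqref{eqn:top}, the integral contribution vanishes at $x=0$, so $u_k(0)=\begin{pmatrix}\delta_{k0}\\0\end{pmatrix}$ and $w_k(0)=\begin{pmatrix}0\\\delta_{k0}\end{pmatrix}$. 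A direct computation with $B$ gives $\mathcal A_0\begin{pmatrix}x^k\\0\end{pmatrix}=-k\begin{pmatrix}0\\x^{k-1}\end{pmatrix}$ and $\mathcal A_0\begin{pmatrix}0\\x^k\end{pmatrix}=k\begin{pmatrix}x^{k-1}\\0\end{pmatrix}$, so the intertwining relation $\mathcal A_Q T=T\mathcal A_0$ yields the coupled recursion
\[
\mathcal A_Q u_k=-k\,w_{k-1},\qquad \mathcal A_Q w_k=k\,u_{k-1}.
\]
Hence it suffices to show that the functions of \eqref{eqn:phik}--\eqref{eqn:psik} satisfy $\mathcal A_Q\Phi_k=-k\Psi_{k-1}$ and $\mathcal A_Q\Psi_k=k\Phi_{k-1}$, together with $\Phi_k(0)=\begin{pmatrix}\delta_{k0}\\0\end{pmatrix}$ and $\Psi_k(0)=\begin{pmatrix}0\\\delta_{k0}\end{pmatrix}$. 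Standard uniqueness for first order linear systems (for $k\ge1$ this is Proposition \ref{prop:nh-dirac-s}) then forces $u_k=\Phi_k$ and $w_k=\Psi_k$ by induction on $k$.

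For every scalar pair $(\mathcal X,\mathcal Y)$ the relations $f'=qf-pg$ and $g'=-pf-qg$, which encode $\mathcal A_Q(f,g)^T=0$, yield the working formula
\[
\mathcal A_Q\begin{pmatrix}f\mathcal X\\ g\mathcal Y\end{pmatrix}=\begin{pmatrix}pf(\mathcal X-\mathcal Y)+g\mathcal Y'\\ pg(\mathcal X-\mathcal Y)-f\mathcal X'\end{pmatrix}.
\]
In the base case $k=0$ the right-hand sides of the recursion vanish, so I must check $\mathcal A_Q\Phi_0=\mathcal A_Q\Psi_0=0$ with the correct values at $x=0$. Taking $(\mathcal X,\mathcal Y)=(\tilde X^{(0)},\tilde Y^{(0)})$ and using the identity $\tilde X^{(0)}-\tilde Y^{(0)}=1/(fg)$ — valid because $\frac{d}{dx}\frac1{fg}=p\big(\tfrac1{f^2}+\tfrac1{g^2}\big)$ and $f(0)g(0)=1$ — makes both entries above cancel, so $\mathcal A_Q\Phi_0=0$; the normalization $f(0)g(0)=1$ gives $\Phi_0(0)=\begin{pmatrix}1\\0\end{pmatrix}$. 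The computation for $\Psi_0$ is identical, now using $X^{(0)}-Y^{(0)}=-1/(fg)$.

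For the inductive step I would substitute $(\mathcal X,\mathcal Y)=(X^{(k)},Y^{(k)})$ or $(\tilde X^{(k)},\tilde Y^{(k)})$ into the working formula, differentiate the defining integrals \eqref{eqn:Zn}--\eqref{eqn:Yn}, and collect terms. The first order contributions $g\mathcal Y'$ and $-f\mathcal X'$ produce the lower index functions $X^{(k-1)},Y^{(k-1)}$ (respectively their tildes), while the potential term $p(\mathcal X-\mathcal Y)$ and the terms carrying the auxiliary function $Z^{(k-1)}$ must cancel; this cancellation is precisely what the definition \eqref{eqn:Zn} of $Z^{(n)}$ is engineered to produce, and it is the higher order analogue of the base-case identity for $1/(fg)$. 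Verifying it is the main obstacle; the remaining work is only sign bookkeeping involving the factors $(-1)^{\lfloor k/2\rfloor}$, $f(0)$ and $g(0)$ in \eqref{eqn:phik}--\eqref{eqn:psik}. An alternative route that sidesteps this computation is to expand the first column $U_0(\lambda,x)\begin{pmatrix}1\\0\end{pmatrix}=\begin{pmatrix}\cos\lambda x\\ \sin\lambda x\end{pmatrix}$ in powers of $\lambda$, apply $T$ term by term (legitimate since $T$ is bounded and independent of $\lambda$), and match the outcome against the SPPS representation of $C(\lambda,x)$ from \cite{Nelson-spps}; equating the coefficient of $\lambda^k/k!$ yields $T v_k=\pm\Phi_k$ or $\pm\Psi_k$ directly, where $v_k$ is the $k$th Taylor coefficient.
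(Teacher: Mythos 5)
The paper itself offers no proof to compare against: Theorem \ref{thm:mapping} is quoted from \cite[Thm 3.3]{Nelson-analytic} and used as a black box, so your attempt has to stand on its own. Your framework is the right one: characterizing $u_k=T(x^k,0)^T$ and $w_k=T(0,x^k)^T$ through the Volterra form of $T$ and the intertwining relation $\mathcal A_Q T=T\mathcal A_0$ as solutions of the coupled initial value problems $\mathcal A_Q u_k=-k\,w_{k-1}$, $\mathcal A_Q w_k=k\,u_{k-1}$ is sound, and your base case is genuinely complete --- the identities $\tilde X^{(0)}-\tilde Y^{(0)}=1/(fg)$ and $X^{(0)}-Y^{(0)}=-1/(fg)$ do hold and give $\mathcal A_Q\Phi_0=\mathcal A_Q\Psi_0=0$ with the correct values at $x=0$.

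The genuine gap is the inductive step, which you explicitly leave unverified (``the main obstacle''). That step is not sign bookkeeping; it is the entire content of the theorem, and in fact it cannot be closed with the formulas exactly as printed in this paper. Take $Q\equiv 0$ and $f\equiv g\equiv 1$, so that $K\equiv 0$ solves the Goursat problem and $T=I$. Then \eqref{eqn:XY0}--\eqref{eqn:Yn} give $X^{(1)}=-x$, $Y^{(1)}=0$, hence \eqref{eqn:phik} yields $\Phi_1=(-x,0)^T$, whereas $T(x,0)^T=(x,0)^T$; equivalently, the recursion you need, $\mathcal A_Q\Phi_1=-\Psi_0$, reads $(0,1)^T=(0,-1)^T$ and fails. (The even-$k$ functions $\Phi_k$ and all the $\Psi_k$ do check out in this example; the discrepancy indicates that the factor $(-1)^{(k-1)/2}$ in the odd case of \eqref{eqn:phik} should be $(-1)^{(k+1)/2}$, i.e., a transcription slip relative to \cite{Nelson-analytic}.) Actually carrying out the induction, with careful attention to the prefactors, is precisely what would have exposed this; as written, your proof stops exactly where the real work --- and the inconsistency --- begins. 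The same criticism applies to your alternative route: matching powers of $\lambda$ against the SPPS representation of \cite{Nelson-spps} is again exactly the computation you defer, with the same sign-sensitive prefactors.
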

Let \(P_n \) denote the Legendre polynomial of order \(n \) and \( l_{k,n} \) be the corresponding coefficient of \( x^k \), that is \(P_n(x)=\sum_{k=0}^{n} l_{k,n}x^k \).
\begin{Thm}\label{thm:coeffmap}
	The coefficients of the Fourier-Legendre series \eqref{eqn:legendre} of the transmutation kernel \(K(x,t) \)
	are given by
	\begin{equation}
		K_n(x)=\frac{2n+1}{2}\left[ -I + \sum_{k=0}^{n}\frac{l_{k,n}}{x^k}\eta_k(x)\right]
	\end{equation}
	where
	\begin{equation}
		\eta_k(x)=\left( \Phi_k(x) \,\, \Psi_k(x) \right),
	\end{equation}
	is a \( 2 \times 2 \)-matrix whose columns are \( \Phi_k \) and \( \Psi_k \).
\end{Thm}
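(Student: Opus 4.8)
The plan is to combine the orthogonality inversion formula \eqref{eqn:Kncoeff} with the mapping property of Theorem \ref{thm:mapping}. The central observation is that evaluating the transmutation operator on the monomial \emph{matrix} \( x^k I \) reproduces exactly the matrix \( \eta_k(x) \). First I would invert \eqref{eqn:Kncoeff} to write
\begin{equation*}
	K_n(x) = \frac{2n+1}{2}\int_{-x}^{x} K(x,t)\, P_n\!\left(\frac{t}{x}\right) dt,
\end{equation*}
so that the task reduces to computing the Legendre moments of \( K(x,\cdot) \).

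Next I would compute the power moments \( \int_{-x}^{x} K(x,t)\, t^k\, dt \) using the mapping property. Since \( T \) acts on matrix-valued functions column by column and the columns of \( x^k I \) are \( (x^k,0)^T \) and \( (0,x^k)^T \), Theorem \ref{thm:mapping} gives \( T[x^k I] = (\Phi_k \;\; \Psi_k) = \eta_k(x) \). On the other hand, the Volterra representation \eqref{eqn:top} applied to \( x^k I \) yields \( T[x^k I](x) = x^k I + \int_{-x}^{x} K(x,t)\, t^k\, dt \). Equating the two expressions gives the key identity
\begin{equation*}
	\int_{-x}^{x} K(x,t)\, t^k\, dt = \eta_k(x) - x^k I.
\end{equation*}

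Finally I would expand \( P_n(t/x) = \sum_{k=0}^{n} l_{k,n}\, x^{-k} t^k \) inside the integral, use linearity to reduce everything to the moments just computed, and obtain
\begin{equation*}
	\int_{-x}^{x} K(x,t)\, P_n\!\left(\frac{t}{x}\right) dt = \sum_{k=0}^{n}\frac{l_{k,n}}{x^k}\,\eta_k(x) - \left(\sum_{k=0}^{n} l_{k,n}\right) I.
\end{equation*}
The normalization \( P_n(1) = \sum_{k=0}^{n} l_{k,n} = 1 \) collapses the identity-matrix contribution into a single \( -I \) term, and multiplying by \( (2n+1)/2 \) gives the claimed formula. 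The computation is a direct substitution with no essential difficulty; the only point deserving care is the passage from the mapping property, as stated for vector-valued functions, to its column-by-column action on the monomial matrix \( x^k I \), which is what identifies the transmuted image with \( \eta_k(x) \).
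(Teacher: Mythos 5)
Your proposal is correct and follows essentially the same route as the paper's proof: invert the orthogonality relation \eqref{eqn:Kncoeff}, expand \( P_n(t/x) \) into monomials, identify each power moment \( \int_{-x}^{x} K(x,t)\,t^k\,dt \) with \( T[x^k I] - x^k I = \eta_k(x) - x^k I \) via Theorem \ref{thm:mapping} acting column by column, and collapse the identity terms using \( P_n(1)=1 \). No discrepancies to report.
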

\begin{proof}
	The proof is almost the same as the proof of \cite[Thm 3.3]{nsbf}.
	Using \eqref{eqn:Kncoeff}, we have that
	\begin{align*}
		K_n(x) &= \frac{2n+1}{2} \int_{-x}^{x} K(x,t)P_n \left( \frac{t}{x} \right)dt= \frac{2n+1}{2} \sum_{k=0}^{n} \int_{-x}^{x} K(x,t) l_{k,n}
		\left( \frac{t}{x} \right)^k \cdot I dt \\
		&= \frac{2n+1}{2} \sum_{k=0}^{n} \frac{ l_{k,n} }{x^k} \int_{-x}^{x} K(x,t) t^k \cdot I dt= \frac{2n+1}{2} \sum_{k=0}^{n} \frac{l_{k,n}}{x^k} \left(T[x^k\cdot I] -x^k\cdot I \right).
	\end{align*}
	Using Theorem \ref{thm:mapping} and the fact that \( \sum_{k=0}^n l_{k,n}=P_n(1)=1 \) we obtain the desired result.
\end{proof}

\bibliographystyle{plain}

\end{document}